\newcommand{\forces}{\Vdash}
\DeclareMathOperator{\supp}{supp}
\DeclareMathOperator{\Lev}{Lev}
\DeclareMathOperator{\Span}{span}
\newcommand{\bp}{\mathbf p}
\newcommand{\bq}{\mathbf q}
\newcommand{\bbN}{{\mathbb N}}
\newcommand{\bbQ}{{\mathbb Q}}
\newcommand{\bbS}{{\mathbb S}}
\newcommand{\bbF}{\mathbb F}
\newcommand{\bbC}{\mathbb C}
\newcommand{\bbT}{\mathbb T}
\newcommand{\cK}{{\mathcal K}}
\newcommand{\cZ}{{\mathcal Z}}
\newcommand{\rs}{\restriction}
\newcommand{\eqK}{=^{\cK}}
\DeclareMathOperator{\id}{id}
\newcommand{\cC}{\mathcal C}
\newcommand{\cB}{\mathcal B}
\newcommand{\calD}{\mathcal D}
\newcommand{\bbP}{\mathbb P}
\newcommand{\e}{\varepsilon}
\newcommand{\cCH}{\cC(H)}
\newcommand{\cKH}{\cK(H)}
\newcommand{\cBH}{\cB(H)}
\newtheorem{thm}{Theorem}
\newtheorem{theorem}{Theorem}[section] 
\newtheorem{claim}[theorem]{Claim}
\newtheorem{lemma}[theorem]{Lemma}
\theoremstyle{definition}
\DeclareMathOperator{\Fin}{Fin}
\newcounter{my_enumerate_counter}
\newcommand{\pushcounter}{\setcounter{my_enumerate_counter}{\value{enumi}}}
\newcommand{\popcounter}{\setcounter{enumi}{\value{my_enumerate_counter}}}
\def\rs{\restriction}
\newcommand{\cP}{{\mathcal P}}
\newcommand{\cU}{{\mathcal U}}
\newcommand{\ba}{\mathbf a}
\newcommand{\bb}{\mathbf b}
\newcommand{\bc}{\mathbf c}
\DeclareMathOperator{\Ad}{Ad}
\title{All automorphisms of all Calkin algebras}
\author{Ilijas Farah}
\address{Department of Mathematics and Statistics\\
York University\\
4700 Keele Street\\
North York, Ontario\\ Canada, M3J 1P3\\
and Matematicki Institut, Kneza Mihaila 34, Belgrade, Serbia}
\urladdr{http://www.math.yorku.ca/$\sim$ifarah}
\email{ifarah@mathstat.yorku.ca}
\date{\today}
\begin{document}
\begin{abstract} 
The Proper Forcing Axiom implies all automorphisms of 
every Calkin algebra associated with an infinite-dimensional complex 
Hilbert space and the ideal of compact operators are inner. 
As a means of the proof we introduce notions of metric $\omega_1$-trees
and coherent families of Polish spaces and develop their theory parallel to the classical theory 
of trees of height $\omega_1$ and coherent families indexed by a $\sigma$-directed ordering. 
\end{abstract} 

   \maketitle
   
    Fix an infinite-dimensional complex
Hilbert space $H$. Let $\cB(H)$ be its algebra of bounded linear operators,
$\cK(H)$ its ideal of compact operators and $\cC(H)=\cB(H)/\cK(H)$ the Calkin
algebra.  Answering a question first asked by Brown--Douglas-Fillmore,  
in  \cite{PhWe:Calkin}  and \cite{Fa:All} it was proved that 
the existence of outer automorphisms of the 
 Calkin algebra associated with a separable $H$ is independent from ZFC.   
In the present paper we consider the existence
of outer automorphisms of the Calkin algebra associated with an arbitrary complex, 
infinite-dimensional
Hilbert space. 

PFA stands for the Proper Forcing Axiom, 
MA for Martin's Axiom
and TA stands
for Todorcevic's Axiom  (see e.g., 
 \cite{To:Combinatorial} or \cite{Moo:PFA} for PFA and TA and  \cite[Chapter II]{Ku:Book} 
for MA). It is well-known that both MA and TA are consequences of PFA. 

   \begin{thm}\label{T0}
   MA and TA together imply  all automorphisms of the Calkin algebra 
   associated with Hilbert space with basis of cardinality $\aleph_1$ are inner. 
   \end{thm}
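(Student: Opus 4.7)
The plan is to filter $H$ by separable subspaces, apply the known separable form of the theorem in each corner, and glue the resulting local inner implementations using the metric $\omega_1$-tree / coherent-family machinery announced in the abstract.

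Fix a continuous increasing chain of projections $(p_\xi)_{\xi<\omega_1}\subset\cBH$ with separable ranges and supremum $1$ (e.g.\ projections onto initial segments of the basis of length $\omega\cdot(1+\xi)$); each $H_\xi:=p_\xi H$ is separable and $C_\xi:=p_\xi\cCH p_\xi\cong\cC(H_\xi)$. For an automorphism $\Phi$ of $\cCH$, the projection $\Phi(p_\xi)$ has the same Murray--von Neumann class as $p_\xi$ in $\cCH$ (separable range with uncountable-rank complement), so one can choose a partial isometry $w_\xi\in\cCH$ with $w_\xi^*w_\xi=p_\xi$ and $w_\xi w_\xi^*=\Phi(p_\xi)$; conjugation by $w_\xi$ converts $\Phi\rs C_\xi$ into an automorphism $\Phi_\xi$ of the separable Calkin algebra $C_\xi$.

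By the separable version of the theorem (\cite{PhWe:Calkin}, \cite{Fa:All}, valid under MA + TA) each $\Phi_\xi$ is implemented by some unitary $v_\xi\in C_\xi$. Set $u_\xi:=w_\xi v_\xi$, a partial isometry from $p_\xi$ to $\Phi(p_\xi)$ with $\Phi(a)=u_\xi a u_\xi^*$ for $a\in C_\xi$. Since the relative commutant of $\Phi(C_\xi)$ inside $\Phi(p_\xi)\cCH \Phi(p_\xi)$ is just $\bbC\cdot\Phi(p_\xi)$, $u_\xi$ is unique up to a unimodular scalar; in particular, for $\xi<\eta$ both $u_\xi$ and $u_\eta p_\xi$ implement $\Phi$ on $C_\xi$, whence $u_\eta p_\xi=\lambda_{\xi\eta}u_\xi$ in $\cCH$ for some $\lambda_{\xi\eta}\in\bbT$. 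The data $(u_\xi)_{\xi<\omega_1}$ therefore constitutes a coherent family of partial isometries with $\bbT$-valued cocycle and fits into a coherent family of Polish spaces in the sense introduced in the paper (the level at $\xi$ being the Polish torsor of unitaries in $\cCH$ implementing $\Phi_\xi$).

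The main obstacle --- and the entire reason for the new machinery --- is the final uniformisation: produce a single unitary $u\in\cCH$ such that $u p_\xi=u_\xi$ holds for cofinally many $\xi$. Once such $u$ is obtained, $\Ad u$ and $\Phi$ agree on every corner $C_\xi$, and a routine argument exploiting $\sup_\xi p_\xi=1$ in $\cCH$ together with the coherence of $(u_\xi)$ on off-diagonal blocks upgrades this to $\Phi=\Ad u$ on all of $\cCH$. The uniformisation is precisely where MA and TA are needed: TA, via the partition principle for open colourings applied to the metric $\omega_1$-tree of local implementers, rules out the stationary obstruction to coherent selection, while MA supplies the generic freedom to resolve successor-step ambiguities. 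Executing this gluing is the technical heart of the paper, and the parallel with the classical theory of $\sigma$-directed coherent families advertised in the abstract is designed precisely to guide this construction.
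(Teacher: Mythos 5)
Your high-level plan---filter by separable corners, apply the separable case, normalize the scalar ambiguity into a coherent family, then uniformize with a branch---is indeed the skeleton of the paper's argument. But two things in the proposal go wrong or are left as unsubstantiated gestures, and they are exactly where the work lives.

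First, the description of how MA and TA enter is not correct. In the paper, TA is used \emph{only} once, implicitly, to obtain the separable base case (Theorem~\ref{T2}); it is never applied to any tree or coherent family of local implementers. All of the gluing is carried out via MA. Your sentence claiming that ``TA, via the partition principle for open colourings applied to the metric $\omega_1$-tree of local implementers, rules out the stationary obstruction'' does not describe a step that exists in the proof, and it is not clear that such a step could be made to work: the relevant colouring on a Polish tree is not, on its face, of the right (open) form.

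Second, and more seriously, the actual uniformization is waved away. Knowing that each subtree of $T$ should have an $\e$-branch for every $\e$ does not follow from coherence of the $(u_\xi)$ alone, and MA does not simply ``supply generic freedom.'' The key new idea in the paper is to introduce, for each $a\in\cB(H)$, an auxiliary tree $T[a]$ of conjugates of $a$ by local implementers, prove (Lemma~\ref{L.branch}) via a $\Delta$-system argument that every subtree of $T[a]$ has a branch, and then add a \emph{generic operator} $\tau$ by a ccc poset of finite rational matrix conditions (the noncommutative analogue of Veli\v ckovi\'c's side-by-side Cohen reals). One then applies MA to the iteration of this poset with a ccc $\e$-specializing poset and derives, through a delicate sequence of estimates (Claims~\ref{Cl.1}--\ref{Cl.3} and the metric comparison Lemma~\ref{L.d}), that every subtree of $T$ itself admits a $5\e$-branch; Lemma~\ref{LA1} then yields an honest branch. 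None of this is visible in the proposal. Relatedly, you take for granted that the lifts $r_\xi$ of $\Phi(\dot p_\xi)$ can be chosen increasing in $\xi$; they cannot, which is why the paper interleaves $p_\alpha\le r_{\alpha+1}\le p_{\alpha+2}$ and defines the levels $X_\alpha$ with the extra cushion $r_{\alpha+1}\cdot p_\alpha$ rather than $r_\alpha\cdot p_\alpha$. Without the generic-operator step and this interleaving, the proposal does not constitute a proof.
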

   
   \begin{thm} \label{T1}
    PFA implies all automorphisms of every Calkin algebra  are inner. 
   \end{thm}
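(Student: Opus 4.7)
The plan is by transfinite induction on the density character $\kappa = \dim H$. The cases $\kappa = \aleph_0$ (from \cite{Fa:All}) and $\kappa = \aleph_1$ (Theorem~\ref{T0}) are already covered, using only MA plus TA, both of which follow from PFA. Assume inductively that every automorphism of $\cC(K)$ is inner whenever $\dim K < \kappa$, and let $\Phi \in \Aut(\cC(H))$ with $\dim H = \kappa > \aleph_1$.

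First, I would fix an orthonormal basis of $H$ and use it to build a continuous increasing chain $\langle H_\xi : \xi < \kappa\rangle$ of closed subspaces with $\dim H_\xi < \kappa$ and $\overline{\bigcup_\xi H_\xi} = H$; let $p_\xi$ be the orthogonal projection onto $H_\xi$. The first substantive step is to modify $\Phi$ by an inner automorphism so as to arrange $\Phi(\pi(p_\xi)) = \pi(p_\xi)$ for every $\xi$ in a club $C \subseteq \kappa$. Since $\Phi(\pi(p_\xi)) = \pi(q_\xi)$ for a projection $q_\xi$ of the same Calkin rank as $p_\xi$, this amounts to finding a single unitary $v \in \cB(H)$ with $v q_\xi v^* - p_\xi \in \cK(H)$ simultaneously for $\xi$ on a club, obtained by a back-and-forth construction along the chain. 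After replacing $\Phi$ by $\Ad \pi(v) \circ \Phi$, the automorphism restricts for each $\xi \in C$ to an automorphism $\Phi_\xi$ of the corner $\pi(p_\xi)\cC(H)\pi(p_\xi) \cong \cC(H_\xi)$, and by the inductive hypothesis $\Phi_\xi$ is implemented by a unitary $u_\xi \in \cB(H_\xi)$, unique up to a unimodular scalar.

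The remaining task is to choose scalars $\lambda_\xi \in \bbT$ so that the family $(\lambda_\xi u_\xi)_{\xi \in C}$ is coherent, meaning $p_\xi(\lambda_\eta u_\eta)p_\xi - \lambda_\xi u_\xi \in \cK(H_\xi)$ whenever $\xi < \eta$ in $C$. Once coherence is achieved, the family has a well-defined strong-operator limit $u \in \cB(H)$ on the dense subspace $\bigcup_\xi H_\xi$, extending to a unitary on $H$, and a routine verification gives $\Ad \pi(u) = \Phi$, completing the induction.

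The main obstacle is this coherent choice of scalars at limit ordinals of uncountable cofinality in $C$. At each such $\eta$, the elements $p_\xi u_\eta p_\xi$ and $u_\xi$ both implement $\Phi_\xi$ and hence differ modulo compacts by a unimodular scalar; these scalars form a cocycle on $C \cap \eta$ whose trivialization is precisely what coherence demands. When $\eta$ has cofinality $\omega_1$, the trivialization is obtained via the machinery of metric $\omega_1$-trees and coherent families of Polish spaces developed in the body of the paper, adapting the classical theory of coherent sequences on $\omega_1$ to the operator-algebraic setting; this is the step where PFA is genuinely used. Limits of countable cofinality are handled by direct operator-norm convergence, while limits of cofinality exceeding $\omega_1$ reduce to the $\omega_1$ case by a standard reflection argument.
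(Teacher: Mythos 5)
The proposal has a genuine gap at its load-bearing step. You assert that once the phases $\lambda_\xi$ are chosen so that $p_\xi(\lambda_\eta u_\eta)p_\xi-\lambda_\xi u_\xi\in\cK(H_\xi)$ for $\xi<\eta$, ``the family has a well-defined strong-operator limit $u$.'' This is false: coherence modulo compacts gives no convergence of the vectors $u_\eta h$ as $\eta$ increases, since the compact corrections can move any individual vector arbitrarily. A coherent family of local unitaries need not be uniformized by a single operator --- this is the exact analogue of the fact that a coherent family $h_\alpha\colon\alpha\to\alpha$ (agreeing mod finite) need not extend to a single $h\colon\omega_1\to\omega_1$, and it is precisely the content of the theorem, not a routine verification. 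Relatedly, you have misidentified the obstruction: the phase ambiguity is trivially resolved by normalizing every $u_\xi$ against a fixed noncompact projection in the bottom corner (as in Lemma~\ref{L01} and \S\ref{S.Setup}; a $\bbT$-valued multiplicative ``cocycle'' on a directed order is always a coboundary), so the metric-tree machinery cannot be spent on ``trivializing the scalars'' --- it is needed to uniformize the coherent family of unitaries itself, which your outline treats as automatic. A secondary gap: producing one unitary $v$ with $vq_\xi v^*-p_\xi$ compact simultaneously for all $\xi$ in a club is itself a uniformization problem of the same kind; the paper explicitly notes that even the club of $\alpha$ for which $\Phi$ restricts to an automorphism of the corner is not known to exist in ZFC, and works around this with the interleaving $p_\alpha\le r_{\alpha+1}$, $r_\alpha\le p_{\alpha+1}$ rather than a back-and-forth.

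For comparison, the paper does not induct on $\kappa$. It forms the coherent family of Polish spaces $X_\lambda$ indexed by $\cP_{\omega_1}(\kappa)$, shows $\Phi$ is inner iff this family is trivial (Lemma~\ref{L.Phi+}), and, if it is not, applies PFA to a $\sigma$-closed collapse followed by a ccc specializing poset (Lemma~\ref{LA2+}) to reflect the failure to an $\aleph_1$-sized set $Z\subseteq\kappa$ on which the restriction of $\Phi$ would be an outer automorphism of $\cC(\ell^2(Z))$, contradicting Theorem~\ref{T0}. If you want to keep an inductive outline, the limit-stage ``gluing'' must be replaced by an argument of this kind showing that the relevant coherent family is trivial; as written, the proposal assumes away the hard part.
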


The only use of TA in the present paper is implicit via the following result
from~\cite{Fa:All}. 

\begin{thm} \label{T2} TA implies all automorphisms of the Calkin algebra on a 
separable, infinite-dimensional Hilbert space are inner. \qed
\end{thm}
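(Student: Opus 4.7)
The plan is to prove Theorem~\ref{T2} by combining two ingredients: a descriptive-set-theoretic rigidity principle coming from TA, and an analysis of $\Phi$ on the atomic masas of $\cC(H)$. Fix an automorphism $\Phi$ of $\cC(H)$ with $H$ separable, and fix an orthonormal basis $(e_n)_{n\in\bbN}$ giving the canonical atomic masa $A\subset \cC(H)$, so that $A\cong \ell^\infty/c_0$ with projection lattice $\cP(\bbN)/\Fin$.

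\emph{Step 1 (Masa rigidity).} Since $\Phi[A]$ is again an atomic masa in $\cC(H)$, choosing an orthonormal basis adapted to its atoms identifies $\Phi|_A$ with an isomorphism $\Phi_A\colon \cP(\bbN)/\Fin \to \cP(\bbN)/\Fin$ on projection lattices. The theorem that TA implies every automorphism of $\cP(\bbN)/\Fin$ is trivial then produces an almost-bijection $\bbN\to\bbN$ inducing $\Phi_A$, which lifts to a partial isometry on $H$ implementing $\Phi$ on $A$ modulo $\cK(H)$.

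\emph{Step 2 (Borel lifts from TA).} The central technical step is to show that $\Phi$ admits a Borel-measurable lift on norm-bounded balls of separable $C^*$-subalgebras of $\cB(H)$. One applies OCA to a coloring on pairs of ``candidate local lifts'' of $\Phi$, where the color records whether the disagreement of the two candidates fails to be compact. The homogeneous-set alternative of the OCA dichotomy would produce an uncountable family of pairwise incompatible lifts, which is ruled out by a ccc/counting argument exploiting separability of $H$; the other alternative decomposes the parameter space into countably many sets on which coherence is continuous, and together these pieces yield a Borel lift. This is the deepest part of the argument and is where the full strength of TA is used; without it, the Phillips--Weaver construction under CH shows that outer automorphisms can exist.

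\emph{Step 3 (Coherence and assembly).} Running Step~1 across a sufficiently rich family of atomic masas (e.g., images of the canonical one under small unitary perturbations, as well as masas adapted to block-diagonal decompositions of $H$) produces a family of partial isometries locally implementing $\Phi$. The Borel lift from Step~2 forces coherence modulo $\cK(H)$ between any two such partial isometries on the intersection of their supports. A transfinite recursion of length $\omega$, using separability of $H$ to ensure termination, then assembles a single unitary $U\in \cB(H)$ whose image in $\cC(H)$ implements $\Phi$ on a dense $*$-subalgebra, and hence on all of $\cC(H)$ by norm continuity. The main obstacle throughout is Step~2; Steps~1 and~3 are, once the appropriate descriptive-set-theoretic framework is available, essentially functional-analytic consequences of $\cP(\bbN)/\Fin$ rigidity and the $C^*$-algebraic structure of $\cC(H)$.
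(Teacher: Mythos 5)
The paper offers no proof of Theorem~\ref{T2}: the \verb|\qed| in the statement signals that it is quoted from \cite{Fa:All}, so your sketch has to be measured against the proof given there. Your skeleton (an OCA dichotomy applied to candidate lifts, then coherence and assembly) points in the right general direction, but the sketch has genuine gaps at each step. The most important structural one is in Steps~1 and~3: the correct ``local'' subalgebras are not atomic masas (nor unitary perturbations of one) but the non-commutative block-diagonal algebras $\calD[\vec E]$ associated with partitions $\vec E$ of $\bbN$ into finite intervals --- the same algebras alluded to in \S\ref{S.calD} of the present paper. The reason masas cannot suffice is that controlling $\Phi$ on a masa says nothing about its action on off-diagonal elements, whereas every element of $\cB(H)$ is, modulo a compact, a sum of two operators block-diagonal with respect to the even and odd parts of a single interval partition; it is this decomposition that lets one recover $\Phi$ from its restrictions to the $\calD[\vec E]$. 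Without it, your Step~3 has no mechanism for passing from agreement on a family of commutative subalgebras to agreement on all of $\cC(H)$, and ``norm continuity on a dense $*$-subalgebra'' does not rescue this, since the union of the masas you describe is not a $*$-subalgebra.

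Second, Step~2 is stated as a goal rather than proved, and it is where essentially the entire difficulty of \cite{Fa:All} lives: one must show that TA (together with some Martin's Axiom in the original argument) yields a $C$-measurable ($\sigma$-Borel) lift of $\Phi$ on each $\calD[\vec E]$, and then --- a step entirely missing from your outline --- convert such a measurable lift into a genuine unitary implementing $\Phi$ on $\calD[\vec E]$. That conversion rests on an Ulam-stability theorem for approximate $*$-homomorphisms between products of finite-dimensional matrix algebras, which is a substantial self-contained part of the cited proof. Finally, the assembly in Step~3 is not a recursion of length $\omega$: one obtains a coherent family of unitaries indexed by the $\sigma$-directed order of interval partitions under almost-refinement, and uniformizing such a family is a separate argument (it is precisely the prototype of the coherent-family machinery developed in \S\ref{SS.Coh} of the present paper). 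As written, the proposal is an accurate table of contents for a proof rather than a proof.
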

   
   All of these results are part of the program of finding set-theoretic rigidity results for 
   algebraic  quotient structures. This program can be traced back  to Shelah's seminal 
   construction of a model of ZFC in 
   which  all automorphisms of $\cP(\bbN)/\Fin$ are trivial (\cite{Sh:Proper}). At present 
   we have a non-unified collection of results and it is unclear how far-reaching 
   this phenomenon is (see \cite[\S 3.2]{Fa:AQ}, \cite{Fa:Liftings}, \cite{Fa:Rigidity} and the last section of \cite{Fa:All}).  
    
   The  
   rudimentary idea of the proofs of Theorem~\ref{T0} and Theorem~\ref{T1} 
   is taken from the analogous Velickovic's 
   results on automorphisms of the Boolean algebra $\cP(\kappa)/\Fin$ in \cite[\S 4]{Ve:OCA}. 
   A sketch of Velickovic's argument is in order for the reader's benefit.  

If $\Phi$ is an automorphism of $\cP(\omega_1)/\Fin$ then there is a closed unbounded set 
$C\subseteq \omega_1$ such that for every $\alpha\in C$ the 
restriction of $\Phi$ to $\cP(\alpha)/\Fin$ is an automorphism of $\cP(\alpha)/\Fin$.  
Since MA and TA imply that all automorphisms of $\cP(\omega)/\Fin$ are trivial (\cite[Theorem~2.1]{Ve:OCA}), for each $\alpha\in C$ we can fix a map $h_\alpha\colon \alpha\to \alpha$
such that the map $\cP(\alpha)\ni A\mapsto h_\alpha[A]\in \cP(\alpha)$ is a representation 
of the restriction of $\Phi$ to $\cP(\alpha)/\Fin$. For $\alpha<\beta<\gamma$ with $\beta$ and $\gamma$ in $C$ we have that $h_\beta\rs \alpha$ and $h_\gamma\rs \alpha$ agree modulo finite. 
Therefore 
\[
T=\{h_\beta\rs \alpha: \alpha<\beta, \beta\in C\},
\]
considered as a tree with respect to the extension ordering,
has countable levels.  Automorphism $\Phi$ is trivial if and only if $T$ has a cofinal branch. 
For every  $f\colon \omega_1\to 2$ the tree 
\[
T[f]=\{f\circ t: t\in T\}
\]
has a cofinal branch, determined by $Y$ such that $[Y]_{\Fin}=\Phi([X]_{\Fin})$, 
where $f=\chi_X$. 
On the other hand, if $\dot f$ is added by forcing with finite conditions $\bbP$ 
 (i.e., if $\dot f$ codes a set of $\aleph_1$ side-by-side Cohen reals over $V$) then $\bbP$
 forces that $T[\dot f]$ has no cofinal branches. Applying MA to the poset for adding 
 $\dot f$ followed by the ccc poset for specializing $T[\dot f]$ one obtains a contradiction. 
 
 Velickovic's  proof of triviality of automorphisms of $\cP(\kappa)/\Fin$ for $\kappa\geq \aleph_2$ 
 uses a PFA-reflection argument, in which the above proof 
 is preceded by a Levy collapse of $\kappa$ to $\aleph_1$.

 While the structure of our proof of Theorem~\ref{T0} loosely resembles the above sketch, a number of nontrivial additions and modifications were required. For example, 
 it is not clear whether for every automorphism $\Phi$ of $\cC(\ell_2(\aleph_1))$ the set $C$ of
 countable ordinals 
 $\alpha$ such that the restriction of $\Phi$ to $\cC(\ell_2(\alpha))$ is an automorphism 
 of the latter algebra is closed and unbounded. 
This follows from  MA+TA by Theorem~\ref{T0}, but I don't know whether this fact is true in ZFC.
This problem is dealt with in \S\ref{S.Setup}.
An another inconvenience 
was caused by the fact that the natural `quantized' 
analogue of the poset for adding $\aleph_1$ Cohen reals is not ccc (Lemma~\ref{LP1}), as
well as the expected non-commutativity complications. 

Also, the appropriate analogues of Velickovic's trees $T$ and $T[f]$ 
are continuous rather than discrete. Therefore the proof of Theorem~\ref{T0} required introduction 
and analysis of `metric $\omega_1$-trees,'  analogous to the classical theory of $\omega_1$-trees. This was done in \S\ref{S.Polish}. This section is independent of the rest of the paper and
it is `purely set-theoretic' in the sense that C*-algebras are not being mentioned in it. 

\subsection*{The structure of the paper} Metric $\omega_1$-trees and  
metric  coherent families are introduced and treated using MA and PFA, respectively, 
 in \S\ref{S.Polish}. 
The short~\S\ref{S.Inner} contains a few  simple and well-known general facts about inner automorphisms  of C*-algebras. In \S \ref{S.Proof.I} we define analogues of trees $T$ and $T[f]$ from Velickovic's proof, and in \S\ref{S.Proof.II} we analyze $T[\tau]$ for an appropriately defined generic operator $\tau$. 
Proof of Theorem~\ref{T1} and brief concluding remarks can be found in \S\ref{S.T1} 
and \S\ref{S.Concluding}, respectively.

Our notation and terminology are standard and 
excellent references for the background on C*-algebras and set 
theory are~\cite{Black:Operator} and \cite{Ku:Book}, respectively. 
Introductions to applications of combinatorial set theory to C*-algebras can be found
in \cite{We:Set} and \cite{FaWo:Set}. 
\section*{Acknowledgments}
Partially supported by NSERC. 
This work started in conversations 
with Ernest Schimmerling and 
Paul McKenney while we visited the Mittag-Leffler Institute in September 2009.
I would like to thank the staff of the Institute for providing a pleasant 
and stimulating atmosphere and   
  the anonymous referee for 
several useful suggestions.
This proof was first presented in a minicourse during the 2009 workshop on
Combinatorial set theory and forcing theory at the
Research Institute in Mathematics in Kyoto. I would like to thank the organizer of the 
workshop, Teruyuki Yorioka, 
for his warm hospitality.

   \section{Polish $\omega_1$-trees} 
   \label{S.Polish} 
   In this section we introduce a continuous version of Aronszajn trees. 
   A note on terminology is in order. In operator algebras `contraction' commonly 
   refers to a map that is \emph{distance-non-increasing}. In some other areas of mathematics
   such maps are referred to as \emph{1-Lipshitz} 
   and `contraction' refers to a  \emph{distance-decreasing} map. The latter type of a map 
   is referred to as a \emph{strict contraction} by operator algebraists. In what follows I 
   use the operator-algebraic terminology, hence a \emph{contraction} $f$ is assumed 
   to satisfy    
   $d(x,y)\geq d(f(x), f(y))$.  Other than this concession, the theory of operator algebras does not 
   make appearance in the present section.

   A \emph{metric $\omega_1$-tree} is a family $T=(X_\alpha, d_\alpha, \pi_{\beta\alpha}$,  
   for $\alpha\leq \beta<\omega_1)$, such that 
   \begin{enumerate}
   \item $X_\alpha$ is a complete metric  space with compatible metric $d_\alpha$, 
   \item $\pi_{\beta\alpha}\colon X_\beta\to X_\alpha$ is a contractive surjection, 
   \item projections $\pi_{\beta\alpha}$ are commuting and $\pi_{\alpha\alpha}=\id_{X_\alpha}$ for 
   all $\alpha$.  
   \end{enumerate}
   If all spaces $X_\alpha$ are separable we say $T$ is a \emph{Polish $\omega_1$-tree}. 
    If in addition the inverse limit $\varprojlim_\alpha X_\alpha$ is empty then we say that $T$ 
    is a \emph{Polish Aronszajn tree}.  Otherwise, the elements of the inverse limit 
    $\varprojlim_\alpha X_\alpha$ 
    are considered to be \emph{branches} through $T$. In our terminology all branches and all 
    $\e$-branches 
    are assumed to be cofinal. 
   
 When each $d_\alpha$ is a  discrete metric then the above definitions reduce to the 
 usual definitions of $\omega_1$-trees and Aronszajn trees (see e.g., \cite{Ku:Book}). Similarly,  
 $\e$-branches, $\e$-antichains and $\e$-special trees  
 as defined below are  branches, antichains, and special trees, respectively, when 
 $0<\e<1$. 
   
   Spaces $X_\alpha$ are assumed to be disjoint and we shall 
   identify $T$ with the union $\bigcup_\alpha X_\alpha$ of its levels when convenient and the 
   projections are clear from the context.  
On $T$ we have a map $\Lev\colon T\to \omega_1$ defined by $\Lev(x)=\alpha$ if and only 
if $x\in X_\alpha$. 

It will be convenient to write $\pi_\alpha$ for 
the map $\bigcup_{\beta\geq \alpha} \pi_{\beta,\alpha}$ from $T$ into $T_\alpha$.  
Define a map $\rho$ on $T^2$ as follows. For $x,y$ in $T$ let $\alpha=\min(\Lev(x), \Lev(y))$ 
and let 
\[
\rho(x,y)=d_\alpha(\pi_\alpha(x), \pi_\alpha(y)). 
\]
Note that $\rho$ is not a metric or  even  a quasi-metric. The triangle 
inequality is violated by any triple such that 
$x\neq z$ but $y=\pi_\alpha(x)=\pi_\alpha(z)$. 

      For $\e>0$ a subset $A$ of $T$ is an \emph{$\e$-antichain} of
   $T$ if  $\rho(x,y)>\e$ for all distinct $x$ and $y$ in $A$. 
      We say that $T$ is \emph{$\e$-special} if there are $\e$-antichains $A_n$, for $n\in \bbN$, 
   such that $X_\alpha \cap \bigcup_n A_n$ is dense in $X_\alpha$, for all $\alpha<\omega_1$. 
   
      For $\e>0$ a subset $A$ of $T$ is an \emph{$\e$-branch}  if 
      $A=\{x_\alpha:  \alpha<\omega_1\}$, $\Lev(x_\alpha)=\alpha$ for all $\alpha$, 
        and $\rho(x_\alpha,x_\beta)\leq \e$ for all $\alpha$, $\beta$. 
         A \emph{subtree} of $T$ is a subset $S\subseteq T$ that is closed under projection maps and 
        intersects every level $X_\alpha$. 

\begin{lemma} The following are equivalent for every metric $\omega_1$-tree $T$ and  $\e>0$. 
\begin{enumerate}
\item $T$ has an $\e$-branch, 
\item There is $B\subseteq T$ that intersects cofinally many levels such that $\rho(x,y)\leq \e$ for all $x,y$ in $B$, 
\item $T$ has a subtree of diameter $\leq\e$. 
\end{enumerate}
\end{lemma}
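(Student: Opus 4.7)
My plan is to prove the three implications (1)$\Rightarrow$(2), (3)$\Rightarrow$(1), and (2)$\Rightarrow$(3), with only the last one requiring a calculation.

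For (1)$\Rightarrow$(2) I observe that an $\e$-branch itself serves as the required set $B$: it meets every level (in particular cofinally many) and has $\rho$-diameter $\leq\e$ by the definition of an $\e$-branch.

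For (3)$\Rightarrow$(1), given a subtree $S$ of $\rho$-diameter $\leq\e$, I would select one point $x_\alpha\in S\cap X_\alpha$ for each $\alpha<\omega_1$ (possible because $S$ meets every level). Since all chosen points lie in $S$, automatically $\rho(x_\alpha,x_\beta)\leq\e$ for all $\alpha,\beta$, so $\{x_\alpha:\alpha<\omega_1\}$ is an $\e$-branch.

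The main implication (2)$\Rightarrow$(3) will proceed as follows. Given $B$ as in (2), I set
\[
S=\{\pi_{\beta\alpha}(x):x\in B,\ \Lev(x)=\beta,\ \alpha\leq \beta\},
\]
the closure of $B$ under the projection maps. Commutativity of the projections immediately gives that $S$ is closed under further projections, and because $B$ meets cofinally many levels, for every $\alpha<\omega_1$ there is some $x\in B\cap X_\beta$ with $\beta\geq\alpha$, so that $\pi_{\beta\alpha}(x)\in S\cap X_\alpha$; hence $S$ is a subtree. The key (and only nontrivial) step is verifying the diameter bound. Given $y,z\in S$ with $\Lev(y)=\alpha\leq\Lev(z)=\gamma$, I write $y=\pi_\alpha(x)$ and $z=\pi_\gamma(w)$ for some $x,w\in B$. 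Commutativity of projections yields $\pi_\alpha(z)=\pi_\alpha(w)$, so $\rho(y,z)=d_\alpha(\pi_\alpha(x),\pi_\alpha(w))$. Setting $\delta=\min(\Lev(x),\Lev(w))\geq\alpha$, contractivity of $\pi_{\delta\alpha}$ then gives
\[
\rho(y,z)=d_\alpha(\pi_{\delta\alpha}\pi_\delta(x),\pi_{\delta\alpha}\pi_\delta(w))\leq d_\delta(\pi_\delta(x),\pi_\delta(w))=\rho(x,w)\leq\e.
\]
This computation is the main obstacle in the argument, but it is short and purely a matter of unwinding definitions using commutativity and contractivity of projections; I do not anticipate any real difficulty.
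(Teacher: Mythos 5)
Your proof is correct and takes essentially the same route as the paper: the only substantive step is (2)$\Rightarrow$(3) via the downwards closure of $B$, whose $\rho$-diameter is bounded by that of $B$ because the projections are $\rho$-nonincreasing, with the remaining implications immediate from the definitions. (The paper's downwards closure also takes the metric closure at each level, but since a subtree need not be topologically closed this difference is immaterial.)
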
 

\begin{proof}For $B\subseteq T$ let its 
\emph{downwards closure} $S(B)$ be the subset of $T$ 
 such that its intersection with $X_\alpha$ is the metric closure of  
$\{\pi_\alpha(x): x\in B, \alpha\leq \Lev(x)\}$. 
Since each $\pi_\alpha$ us $\rho$-nonincreasing, the `$\rho$-diameter' of $S(B)$ is equal to the 
`$\rho$-diameter' of $B$.  
This shows that (2) implies (3), and the other implications do not require a proof. 
\end{proof}

        \begin{lemma} \label{LA1} Assume $T$ is a metric $\omega_1$-tree such that each of its
        subtrees has an $\e$-branch for every $\e>0$. Then $T$ has a branch. 
        \end{lemma}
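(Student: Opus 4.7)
The plan is to build a decreasing sequence of subtrees $T=T_0\supseteq T_1\supseteq T_2\supseteq\cdots$ of $T$, each a subtree of $T$ whose level slices $T_n\cap X_\alpha$ are metrically closed in $X_\alpha$, such that the $\rho$-diameter of $T_n$ is at most $2^{-n}$ for $n\ge 1$, and then to read off a branch from their intersection using completeness of the $X_\alpha$.

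For the inductive step I would apply the hypothesis to $T_n$ (a subtree of $T$ by the invariant) to obtain a $2^{-n-1}$-branch $B\subseteq T_n$, and then let $T_{n+1}$ be its downward closure $S(B)$ as constructed in the previous lemma. That lemma supplies the diameter bound $\rho\text{-diam}(S(B))\le 2^{-n-1}$; the very definition of $S(B)$ makes each level slice the metric closure of a set in $X_\alpha$, hence closed in $X_\alpha$; and since $T_n\cap X_\alpha$ is closed by the invariant, the limit points used to form $S(B)\cap X_\alpha$ remain inside $T_n\cap X_\alpha$, so $T_{n+1}\subseteq T_n$. Finally $T_{n+1}$ is a subtree of $T$: it is closed under the projection maps by construction, and it meets every level because $B$ does, $\epsilon$-branches being cofinal by the paper's convention.

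To extract a branch I would fix $\alpha<\omega_1$ and observe that $(T_n\cap X_\alpha)_{n\in\bbN}$ is a decreasing chain of nonempty closed subsets of the complete metric space $X_\alpha$ with diameters tending to zero. Cantor's intersection theorem then produces a unique common point $x_\alpha$. Coherence of the resulting sequence is automatic: for $\alpha\le\beta$, each $T_n$ is closed under projections, so $\pi_{\beta\alpha}(x_\beta)\in T_n\cap X_\alpha$ for every $n$, forcing $\pi_{\beta\alpha}(x_\beta)=x_\alpha$. Thus $(x_\alpha)_{\alpha<\omega_1}\in\varprojlim_\alpha X_\alpha$ is a branch of $T$.

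The only point needing real care is preserving the invariant that each $T_{n+1}$ is still a subtree of $T$ (so that the hypothesis can be reapplied) together with the closedness of its level slices (so that the Cantor-intersection argument goes through). Both requirements are handled simultaneously by choosing $T_{n+1}$ to be the downward closure $S(B)$ of an $\epsilon$-branch of $T_n$; the rest of the argument is then soft.
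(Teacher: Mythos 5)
Your proposal is correct and follows essentially the same route as the paper's proof: iteratively take the downward closure $S(B_n)$ of a $2^{-n}$-branch of the previous subtree, then apply Cantor's intersection theorem level by level and use closure under the (contractive, commuting) projections to get coherence. The paper's version is just a terser rendering of the same argument with $1/n$ in place of $2^{-n}$.
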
 

\begin{proof} 
Choose $B_n$, for $n\in \bbN$, so that $B_n$ is a $1/n$-branch and 
$B_{n+1}\subseteq S(B_n)$. Then for every $\alpha$ we have that 
 $B_n\cap X_\alpha$, for $n\in \bbN$,  is a 
decreasing sequence of subsets of $X_\alpha$ with diameters converging to 0. 
If $x_\alpha$ is the unique point in $\bigcap_n (B_n\cap X_\alpha)$ then 
the fact that the projections are commuting contractions easily implies that 
$x_\alpha$, for $\alpha<\omega_1$, is a branch of $T$. 
\end{proof} 

There is a Polish Aronszajn tree with an $\e$-branch for all $\e>0$ 
but no branches. To see this, fix any special Aronszajn tree $T$. 
Let $X_\alpha$ be the disjoint union of 
countably many copies of the  $\alpha$-th level   of $T$ and define $d_\alpha$ so that the 
the $n$-th copy has diameter $1/n$ and the distance between two distinct copies is 1. 
With the natural projection maps, the $n$-th copy of $T$ includes a $1/n$-branch 
but $T$ has no  branches. 

In the following lemma and elsewhere no attempt was made to find optimal numerical estimates. 

\begin{lemma} \label{LA1.5} If $T$ is an $\e$-special metric $\omega_1$-tree
then it has no $\e/2$-branches. 
\end{lemma}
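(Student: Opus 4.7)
The plan is to argue by contradiction: assume that $T$ is $\e$-special, witnessed by $\e$-antichains $A_n$ ($n\in\bbN$) whose union is dense on every level, and that $B=\{x_\alpha:\alpha<\omega_1\}$ is an $\e/2$-branch of $T$. I will produce two elements of some single $A_n$ whose $\rho$-distance is strictly less than $\e$, which contradicts the definition of an $\e$-antichain.

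The first step is to approximate the branch by antichain elements on each level. For each $\alpha<\omega_1$, density of $\bigcup_n A_n\cap X_\alpha$ in $X_\alpha$ provides some $n(\alpha)\in\bbN$ and a point $y_\alpha\in A_{n(\alpha)}\cap X_\alpha$ with $d_\alpha(x_\alpha,y_\alpha)<\e/4$. Since $n(\alpha)\in\bbN$ while $\alpha$ ranges over $\omega_1$, a trivial pigeonhole gives a single $n\in\bbN$ and an uncountable $S\subseteq\omega_1$ with $y_\alpha\in A_n$ for all $\alpha\in S$.

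Now I pick any $\alpha<\beta$ in $S$, so that $y_\alpha\neq y_\beta$ (they live on different levels). By definition of $\rho$ and the triangle inequality in $X_\alpha$,
\[
\rho(y_\alpha,y_\beta)=d_\alpha\bigl(y_\alpha,\pi_{\beta\alpha}(y_\beta)\bigr)\leq d_\alpha(y_\alpha,x_\alpha)+d_\alpha\bigl(x_\alpha,\pi_{\beta\alpha}(x_\beta)\bigr)+d_\alpha\bigl(\pi_{\beta\alpha}(x_\beta),\pi_{\beta\alpha}(y_\beta)\bigr).
\]
The first summand is less than $\e/4$ by the choice of $y_\alpha$. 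The middle summand equals $\rho(x_\alpha,x_\beta)$ and is at most $\e/2$ because $B$ is an $\e/2$-branch. The last summand is at most $d_\beta(x_\beta,y_\beta)<\e/4$ by contractivity of $\pi_{\beta\alpha}$. Adding these gives $\rho(y_\alpha,y_\beta)<\e$, contradicting the fact that $y_\alpha,y_\beta$ are distinct members of the $\e$-antichain $A_n$.

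There is no real obstacle: the proof reduces to the identity $\e/4+\e/2+\e/4=\e$ and to noticing that the middle term can be reinterpreted as $\rho(x_\alpha,x_\beta)$ via the relation $\pi_\alpha(x_\alpha)=x_\alpha$ and $\pi_\alpha(x_\beta)=\pi_{\beta\alpha}(x_\beta)$. The only small thing to be careful about is using the strict inequalities in the approximation step so that the final bound is strictly less than $\e$ (rather than $\leq\e$); this is automatic from the density witness. No use of the completeness of the $X_\alpha$ or of any combinatorial axiom beyond the uncountable-to-countable pigeonhole is needed.
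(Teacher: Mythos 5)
Your proof is correct and follows essentially the same route as the paper's: approximate each point of the putative $\e/2$-branch to within $\e/4$ by a member of some $A_{n}$, pigeonhole to a single $n$ on an uncountable set, and use contractivity of the projections together with the triangle inequality in $X_\alpha$ to get $\rho(y_\alpha,y_\beta)<\e/4+\e/2+\e/4=\e$, contradicting the $\e$-antichain condition. Your write-up is in fact slightly more careful than the paper's (which even has a typo, saying ``$\e$-branch'' where it means ``$\e/2$-branch''), but there is no substantive difference.
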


\begin{proof} Let $A_n$, for $n\in \bbN$, be $\e$-antichains with dense union in each level. 
Assume $x_\alpha$, for $\alpha<\omega_1$, is an $\e$-branch. 
Let $n$ be such that $d_\alpha(x_\alpha, z_\alpha)<\e/4$ for 
some $z_\alpha\in A_n\cap X_\alpha$
for uncountably many $\alpha$. 
Since projections are contractions, for such
 $\alpha<\beta$ we have $\rho(z_\alpha, z_\beta)<\e$, a contradiction. 
 \end{proof} 

The proof of the following lemma is a straightforward modification of the well-known analogous fact for $\omega_1$-trees. 
  
          \begin{lemma}[MA] \label{LA2} 
         Assume $T$ is a Polish  $\omega_1$-tree with no $\e$-branches. 
         Then  $T$ is $\e/2$-special. 
         \end{lemma}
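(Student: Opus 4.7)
The plan is to mimic the classical MA-argument that an $\omega_1$-tree with no uncountable branch is special, replacing `antichain' by `$\e/2$-antichain,' `tree-comparability' by `$\rho$-distance at most $\e/2$,' and `uncountable branch' by `$\e$-branch.' Let $\bbP$ be the poset of pairs $(p,f)$ with $p\subseteq T$ finite and $f\colon p\to\bbN$ such that each color class $f^{-1}(n)$ is an $\e/2$-antichain, ordered by end-extension. For each $\alpha<\omega_1$ fix a countable dense set $\{z_{\alpha,k}\}_{k\in\bbN}\subseteq X_\alpha$ and set
\[
D_{\alpha,k,m}=\{(p,f)\in\bbP:\exists y\in p\cap X_\alpha,\ d_\alpha(y,z_{\alpha,k})<1/m\}.
\]
Density of $D_{\alpha,k,m}$ is immediate: given $(p,f)$, simply adjoin $z_{\alpha,k}$ with a color not used by $f$, so that its singleton color class is trivially an $\e/2$-antichain. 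Granting that $\bbP$ is ccc, MA applied to the $\aleph_1$-many dense sets $D_{\alpha,k,m}$ produces a filter $G$, and setting $A_n=\{y\in T:\exists(p,f)\in G,\ y\in p\land f(y)=n\}$ gives a countable sequence of $\e/2$-antichains (by the filter property and validity of conditions) whose union is dense in every $X_\alpha$, witnessing that $T$ is $\e/2$-special.

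The substantive part is therefore the ccc of $\bbP$, and this is where the no-$\e$-branch hypothesis is invoked. Suppose for contradiction that $\{(p_\xi,f_\xi):\xi<\omega_1\}$ is an uncountable antichain in $\bbP$. I would apply the $\Delta$-system lemma and iterated pigeonhole to pass to uncountable $I\subseteq\omega_1$ on which $p_\xi=r\sqcup s_\xi$ has a fixed root $r$, $|s_\xi|=m$ is constant, $f_\xi\rs r$ is constant, a canonical enumeration $s_\xi=(y_\xi^1,\dots,y_\xi^m)$ has fixed color pattern $f_\xi(y_\xi^i)=g(i)$, and, for each $i$, $\xi\mapsto\Lev(y_\xi^i)$ is either constant or strictly increasing with cofinal image. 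Incompatibility of the $\xi$-th and $\eta$-th conditions then furnishes indices $i(\xi,\eta),j(\xi,\eta)$ with $g(i(\xi,\eta))=g(j(\xi,\eta))$ and $\rho(y_\xi^{i(\xi,\eta)},y_\eta^{j(\xi,\eta)})\leq\e/2$. A further pigeonhole stabilizes these indices at some $i^*,j^*$ for $\aleph_1$ many pairs, and, using that the $\pi$-maps are contractions together with inequalities of the form $d_\beta(\pi_\beta(y_\xi^{j^*}),\pi_\beta(y_\eta^{j^*}))\leq\e$ obtained by routing through a third $y_\zeta^{i^*}$, one extracts a cofinal subset $B\subseteq T$ of $\rho$-diameter at most $\e$. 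Lemma 2.3 then produces an $\e$-branch of $T$, contradicting the hypothesis.

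The main obstacle I expect is this last extraction of the cofinal $\rho$-bounded set $B$. Since $\omega_1\not\to(\omega_1)^2_k$, one cannot monochromatize the map $(\xi,\eta)\mapsto(i(\xi,\eta),j(\xi,\eta))$ on an uncountable set by a direct Ramsey appeal; one needs a Fodor-style pressing-down on the level assignments combined with a further $\Delta$-system refinement. The genuinely new wrinkle relative to the classical argument is the case in which both $i^*$ and $j^*$ correspond to constant-level coordinates: classically this case is vacuous, because two distinct elements at the same level are tree-incomparable, but in the continuous setting it demands honest use of the separability of the relevant $X_{\alpha_{i^*}}$, together with the disjointness of the $s_\xi$, to force a contradiction with no-$\e$-branch. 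This is the only step where the metric setting departs nontrivially from the classical proof.
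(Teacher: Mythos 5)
Your overall architecture coincides with the paper's: a ccc specializing poset, $\aleph_1$ dense sets, and a ccc proof that runs a $\Delta$-system refinement and then converts an uncountable antichain of conditions into a cofinal subset of $T$ of $\rho$-diameter $\le\e$, contradicting the no-$\e$-branch hypothesis via the equivalence with $\e$-branches. (Your poset of partial specializing functions is interchangeable with the paper's finite-support product $\bbP_0^{<\omega}$ of the single-$\e/2$-antichain poset; that difference is cosmetic.)

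However, the step you yourself flag as ``the main obstacle'' is precisely the content of the lemma, and your proposed fix does not close it. Having fixed the $\Delta$-system, incompatibility of the $\xi$-th and $\eta$-th conditions only gives you, for each pair $\xi<\eta$, \emph{some} indices $i,j$ with $\rho(y_\xi^i,y_\eta^j)\le\e/2$; as you note, $\omega_1\not\to(\omega_1)^2_{m^2}$ blocks a direct Ramsey monochromatization, and ``a further pigeonhole'' that stabilizes $(i^*,j^*)$ merely ``for $\aleph_1$ many pairs'' is useless --- an uncountable set of pairs need not contain even three indices pairwise related, so the routing argument never gets off the ground. Fodor plus another $\Delta$-system will not repair this either: the obstruction is not about levels but about which coordinates witness incompatibility for which pairs. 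The device the paper uses is a uniform ultrafilter $\cU$ on the index set $J$: assuming no two conditions are compatible, pigeonhole over $\cU$ yields fixed $i,j$ and a set $J_1\in\cU$ such that for every $\alpha\in J_1$ the set of $\beta$ with $\rho(z_\alpha(i),z_\beta(j))<\e/2$ lies in $\cU$. Then any two $\alpha<\gamma$ in $J_1$ share a common witness $\beta$, and since the projections are contractions, routing through $z_\beta(j)$ at the minimal level gives $\rho(z_\alpha(i),z_\gamma(i))<\e$; thus the $i$-th coordinates along $J_1$ form a cofinal set of $\rho$-diameter $\le\e$, hence an $\e$-branch. You need this (or an equivalent counting/induction argument) spelled out; without it the ccc claim is unproved.

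A secondary point: the ``constant-level'' case you worry about is a complication you create by letting conditions be arbitrary finite subsets of $T$. If, as in the paper, conditions are finite $\e/2$-antichains inside fixed countable dense sets $Z_\alpha\subseteq X_\alpha$, then a counting argument shows the non-root parts $\bq_\alpha$ must have $\min\{\Lev(x):x\in\bq_\alpha\}$ converging to $\omega_1$ (otherwise uncountably many conditions sit inside a single countable set and two of them coincide off the root), so after thinning the levels interleave and no two compared coordinates ever share a level. Adopting that convention removes the only place where you invoke separability of a single level, and it costs nothing for the density argument.
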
 
         
         \begin{proof} For each $\alpha$ fix a countable dense subset $Z_\alpha$ of $X_\alpha$. 
         Let $\bbP_0$ be the poset of finite $\e/2$-antichains included in $\bigcup_\alpha Z_\alpha$ 
         ordered 
         with $\bp\geq \bq$ if $\bp\subseteq \bq$. 
         
 We shall prove $\bbP_0$ is ccc. Fix $\bp_\alpha$, $\alpha<\omega_1$ in $\bbP_0$. 
Since each $Z_\alpha$ is countable, 
 by a $\Delta$-system argument we can find $\bar\alpha$,  an uncountable 
 $J\subseteq \omega_1$, and (writing $Z=\bigcup_{\beta\leq\bar\alpha} Z_\beta$)
 $\bar\bp\subseteq Z$ and $\bar\bq\subseteq Z$ so that the following hold for all $\alpha\in J$. 
 First, $\bp_\alpha=\bar \bp\cup  \bq_\alpha$. 
 Second, $\pi_{\bar \alpha}$ maps $\bq_\alpha$ injectively onto $\bar\bq$. 
 Third, $\gamma(\alpha)=\min\{\Lev(x): x\in \bq_\alpha\}$ converges 
      to $\omega_1$.  
      
      It suffices to find $\alpha<\beta$ in $J$ such that $\bq_\alpha\cup \bq_\beta$ 
      is an $\e/2$-antichain. Let $n=|\bar\bq|$ and fix an enumeration 
      $\bq_\alpha=\{z_\alpha(i): i<n\}$ for all $\alpha\in J$. 
      Let $\cU$ be a uniform ultrafilter on $J$. Assuming $\alpha$ and $\beta$ as above
      cannot be found, there are $i<j<n$ such that 
      the set $J_1=\{\alpha\in J : \{\beta: \rho(z_\alpha(i), z_\beta(j))<\e/2\}\in \cU\}$
belongs to $\cU$. But then $\rho(z_\alpha(i), z_\gamma(i))<\e$ for all $\alpha<\gamma$ in $J_1$, 
and therefore $\{z_\alpha(i): \alpha\in J_1\}$ defines an $\e$-branch of $T$. 

This proof that $\bbP_0$ is ccc shows that it is powefully ccc, i.e., the finitely supported product 
$\bbP_0^{<\omega}$ of countably many copies of $\bbP$ is ccc.  
Apply MA to the ccc poset $\bbP=\bbP_0^{<\omega}$ and $\aleph_1$ many 
dense sets assuring that  $\bbP$ ads countably many $\e$-antichains $A_n$
whose union is equal to $\bigcup_\alpha Z_\alpha$. 
\end{proof}

      \subsection{Coherent families of Polish spaces}
      \label{SS.Coh} 
      The material of this subsection plays a role only in the proof of Theorem~\ref{T1} and the reader       may safely skip it in the first reading. 
      
      A system $\bbF=(X_\lambda, d_\lambda, \pi_{\lambda'\lambda}: \lambda<\lambda'$ in $\Lambda)$ is a 
       \emph{coherent family of Polish spaces}    if 
   \begin{enumerate}
   \item $\Lambda$ is upwards  $\sigma$-directed  set and a lower semi-lattice,  
      \item $X_\lambda$ is a Polish   space with compatible metric $d_\lambda$, 
   \item $\pi_{\lambda'\lambda}\colon X_{\lambda'}\to X_\lambda$ is a contractive surjection, 
   \item projections $\pi_{\lambda'\lambda}$ are commuting 
   and $\pi_{\lambda\lambda}=\id_{X_\lambda}$ for 
   all $\lambda$.  
   \end{enumerate}
   The family is \emph{trivial} if  $\varprojlim_\lambda X_\lambda\neq\emptyset$. 
   Hence if $\Lambda=\omega_1$ with its natural ordering then $\bbF$ is a 
   Polish $\omega_1$-tree. 

   Spaces $X_\lambda$ are assumed to be disjoint and we shall 
   identify $\bbF$ with the union $\bigcup_\lambda X_\lambda$ of its levels when convenient and
   when the choice of projections is clear from the context.  
   On $\bbF$ we have a map $\Lev\colon \bbF\to \Lambda$ 
   defined by $\Lev(x)=\lambda$ if and only 
if $x\in X_\lambda$. 
It will be convenient to write $\pi_\lambda$ for 
the map $\bigcup_{\lambda'\geq \lambda} \pi_{\lambda'\lambda}$. 

Define a map 
$\rho$ on $\bbF^2$  as follows. For $x,y$ in $\bbF$ let $\lambda=\Lev(x)\wedge \Lev(y)$ 
and let 
\[
\rho(x,y)=d_\lambda(\pi_\lambda(x), \pi_\lambda(y)). 
\]
      For $\e>0$ a subset $A$ of $T$ is an \emph{$\e$-antichain} of
   $\bbT$ if  $\rho(x,y)>\e$ for all distinct $x$ and $y$ in $A$. 
   A set $\{x_\lambda: \lambda\in \Lambda\}$ is an \emph{$\e$-branch}  of $\bbF$ if 
   $x_\lambda\in X_\lambda$ for all $\lambda$ and $\rho(x_\lambda, x_{\lambda'})\leq \e$ for 
   all $\lambda$ and $\lambda'$.  
   
   If $Y_\lambda\subseteq X_\lambda$ is a nonempty Polish subspace for all $\lambda$ and 
   the family $Y_\lambda$, for $\lambda\in \Lambda$, is closed under the projection maps
   then (with $d_\lambda'$ denoting the restriction of $d_\lambda$ to $Y_\lambda$) 
   we say that $\bbF'=(Y_\lambda, d_\lambda', \pi_{\lambda'\lambda}$, for $\lambda<\lambda'$ in $\Lambda)$ is a \emph{cofinal subfamily of $\bbF$}. 

Proof of the following is analogous to the proof of Lemma~\ref{LA1}. 

\begin{lemma} \label{LA1+} Assume $\bbF$ is a coherent   family of Polish spaces such that  
each of its
        cofinal  subfamilies 
        has an $\e$-branch for every $\e>0$. Then $\bbF$ is trivial. \qed
        \end{lemma}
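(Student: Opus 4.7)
The plan is to imitate the proof of Lemma~\ref{LA1} almost verbatim, with cofinal subfamilies of $\bbF$ playing the role of subtrees. The only real novelty is that arguments which in Lemma~\ref{LA1} relied on the tree structure on $\omega_1$ must be recast using the lower semi-lattice structure of $\Lambda$; everything else carries over directly.

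First I would formalize a downward-closure operation: for an $\e$-branch $B=\{x_\lambda:\lambda\in\Lambda\}$ of a cofinal subfamily, set
\[
S(B)\cap X_\lambda = \overline{\{\pi_{\lambda'\lambda}(x_{\lambda'}):\lambda'\geq\lambda\}}.
\]
Commutativity of the projections makes $S(B)$ closed under projections and hence a cofinal subfamily. The key estimate is that $S(B)$ has $\rho$-diameter $\leq\e$: for any $\mu$ and any $\lambda_1,\lambda_2\geq\mu$ the meet $\lambda_1\wedge\lambda_2$ lies above $\mu$, so factoring $\pi_{\lambda_i\mu}=\pi_{\lambda_1\wedge\lambda_2,\mu}\circ\pi_{\lambda_i,\lambda_1\wedge\lambda_2}$ and using contractivity yields
\[
d_\mu\bigl(\pi_{\lambda_1\mu}(x_{\lambda_1}),\pi_{\lambda_2\mu}(x_{\lambda_2})\bigr)\leq \rho(x_{\lambda_1},x_{\lambda_2})\leq\e,
\]
and this bound is preserved under closure.

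With this in hand I would inductively build cofinal subfamilies $\bbF_0=\bbF\supseteq\bbF_1\supseteq\cdots$ together with $1/(n{+}1)$-branches $B_n=\{x^n_\lambda:\lambda\in\Lambda\}$ of $\bbF_n$, setting $\bbF_{n+1}=S(B_n)$; the hypothesis applied to the cofinal subfamily $\bbF_n$ guarantees that $B_n$ exists. Fixing $\lambda\in\Lambda$, the point $x^n_\lambda=\pi_{\lambda\lambda}(x^n_\lambda)$ lies in $S(B_n)\cap X_\lambda$, as does $x^m_\lambda$ for every $m\geq n$; since this set has $d_\lambda$-diameter $\leq 1/(n{+}1)$, the sequence $(x^n_\lambda)_n$ is $d_\lambda$-Cauchy and converges to some $x_\lambda\in X_\lambda$ by completeness.

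It remains to verify the thread condition. For $\lambda\leq\lambda'$ one has $\rho(x^n_\lambda,x^n_{\lambda'})=d_\lambda(x^n_\lambda,\pi_{\lambda'\lambda}(x^n_{\lambda'}))\leq 1/(n{+}1)$; passing to the limit and using continuity of $\pi_{\lambda'\lambda}$ yields $x_\lambda=\pi_{\lambda'\lambda}(x_{\lambda'})$, so $(x_\lambda)_{\lambda\in\Lambda}\in\varprojlim_\lambda X_\lambda$ and $\bbF$ is trivial. I do not expect a genuine obstacle: the only delicate step is the diameter bound for $S(B)$, whose proof essentially consists of inserting the meet $\lambda_1\wedge\lambda_2$ as an intermediate level, and this is precisely why the \emph{lower semi-lattice} axiom was built into the definition of a coherent family.
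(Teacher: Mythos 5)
Your proof is correct and is precisely the adaptation of the proof of Lemma~\ref{LA1} that the paper intends (the paper only remarks that the argument is ``analogous''): downward closures $S(B_n)$ of successively finer $\e$-branches, a diameter bound for $S(B)$, and a completeness/limit argument producing a thread. Your explicit use of the meet $\lambda_1\wedge\lambda_2$ to get the diameter bound is exactly the point where the lower semi-lattice hypothesis enters, so nothing is missing.
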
 

Assume $\bbF$ is a  coherent family of Polish  spaces. 
If  $f\colon \omega_1\to \bbF$ is a strictly increasing map
then we say  the Polish $\omega_1$-tree 
$(X_{f(\alpha)}, d_{f(\alpha)}, \pi_{f(\beta)f(\alpha)}, \alpha\leq \beta<\omega_1)$ 
is a \emph{Polish subtree} of $\bbF$.

\begin{lemma}[PFA] \label{LA2+} 
Assume $\bbF=(X_\lambda, d_\lambda, \pi_{\lambda'\lambda}:\lambda<\lambda'$ in $ \Lambda)$ 
 is a coherent family of Polish   spaces  with no $\e$-branches. 
         Then  $\bbF$ has an  $\e/6$-special Polish subtree. 
         \end{lemma}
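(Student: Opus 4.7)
The plan is to adapt the MA proof of Lemma \ref{LA2} to the coherent-family setting by using a single proper poset and invoking PFA. The poset will simultaneously build a strictly increasing $f: \omega_1 \to \Lambda$---whose range picks out a Polish subtree $T_f$ of $\bbF$---together with countably many $\e/6$-antichains in $T_f$ having dense union in every level, thereby witnessing $\e/6$-specialness directly.

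Fix a countable dense $Z_\lambda \subseteq X_\lambda$ for each $\lambda \in \Lambda$. Let $\bbP$ consist of conditions $p = (\sigma_p, A_p^0, \dots, A_p^{k_p - 1})$ with $\sigma_p : s_p \to \Lambda$ a finite strictly increasing partial function ($s_p \subseteq \omega_1$ finite) and each $A_p^n \subseteq \bigcup_{\alpha \in s_p} Z_{\sigma_p(\alpha)}$ a finite $\e/6$-antichain of $\bbF$, ordered by coordinatewise reverse inclusion. Standard dense sets---$\{p : \alpha \in s_p\}$ for $\alpha < \omega_1$, $\{p : \sigma_p(\max s_p) \ge \lambda\}$ for $\lambda \in \Lambda$, and $\{p : z \in \bigcup_n A_p^n\}$ once the level of $z \in \bigcup_\lambda Z_\lambda$ appears in the range of $\sigma_p$---guarantee that a filter meeting $\aleph_1$ of them yields the required chain $f$, subtree $T_f$, and antichains $(A_n)$.

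The crux is properness of $\bbP$. For a countable elementary submodel $M \prec H(\theta)$ containing $\bbP, \bbF, \e, (Z_\lambda)$, let $\delta = M \cap \omega_1$ and use $\sigma$-directedness to pick $\mu \in \Lambda$ strictly above the countable set $M \cap \Lambda$. For $p \in \bbP \cap M$ I would take $q := p \cup \{(\delta, \mu)\}$ as the candidate master condition; by construction every finite condition in $M$ has $\sigma$-values bounded by $\mu$, so the only possible obstruction to extending $q$ generically over $M$ comes from the $\e/6$-antichain clause. The verification of $(M, \bbP)$-genericity follows the $\Delta$-system / ultrafilter pattern of Lemma \ref{LA2}: a failure produces inside $M$ an uncountable family $\{r_\xi\}_{\xi<\omega_1}$ whose only conflicts with a given $r \le q$ are antichain violations, and after extracting a $\Delta$-system and exploiting the coherence of $\bbF$ one assembles from the conflicts an uncountable family of points at pairwise $\rho$-distance $< \e/3$.

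The main obstacle I anticipate is upgrading this $\e/3$-close family---which a priori lives only over the $\aleph_1$-sized sublattice $\bigcup_\xi (M_\xi \cap \Lambda)$ for an ambient $\in$-chain $(M_\xi)_{\xi<\omega_1}$---to a genuine $\e$-branch of the full coherent family $\bbF$, contradicting the hypothesis. This is precisely where PFA is needed beyond MA. Lemma \ref{LA1+} should bridge the gap: by rerunning the properness-failure configuration uniformly over every cofinal subfamily $\bbF'$ of $\bbF$ (using $\sigma$-directedness to adjust the auxiliary chain of submodels so that witnesses can be extracted inside any prescribed $\bbF'$), every cofinal subfamily of $\bbF$ is forced to admit $\e/3$-branches; repeating the argument for arbitrarily small $\e' > 0$ yields $\e'$-branches in every cofinal subfamily, and Lemma \ref{LA1+} then produces a bona fide branch of $\bbF$---in particular an $\e$-branch---contradicting the hypothesis. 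This closes properness, PFA supplies the generic filter, and the $\e/6$-special Polish subtree $T_f$ is read off from $f$ and $(A_n)$.
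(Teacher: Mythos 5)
There is a genuine gap, and it occurs exactly where you locate the "crux": the properness of your poset. The problem is not the antichain clause but the $\sigma$-part. Your candidate master condition $q=p\cup\{(\delta,\mu)\}$ only controls what happens \emph{at} $\delta$; a condition $r\leq q$ may contain a pair $(\beta,\lambda)$ with $\beta<\delta$ but $\lambda\in\Lambda\setminus M$ (this is consistent with $\lambda<\mu$ and with strict increase). Then for the dense set $D_\alpha=\{r':\alpha\in s_{r'}\}$ with $\beta<\alpha<\delta$, every $r'\in D_\alpha\cap M$ has $\sigma_{r'}(\alpha)\in M\cap\Lambda$, and there is no reason to have $\lambda<\sigma_{r'}(\alpha)$; in the case that matters for Theorem~\ref{T1}, $\Lambda=\cP_{\omega_1}(\kappa)$, one concretely takes $\lambda\ni\eta$ for some $\eta\in\kappa\setminus M$, and then $\lambda\not\subseteq\sigma_{r'}(\alpha)$ for \emph{every} $r'\in M$, so $r$ is incompatible with all of $D_\alpha\cap M$ and $q$ is not $(M,\bbP)$-generic. (The reason the analogous finite-condition trick works for clubs in $\omega_1$ is that there everything below $\delta$ automatically lies in $M$; that fails for a general $\sigma$-directed $\Lambda$.) Building a cofinal increasing $\omega_1$-chain in $\Lambda$ with finite conditions is not a proper forcing without substantial extra machinery (e.g.\ models as side conditions), and this cannot be patched by tinkering with the antichain clause.

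The second gap is in how you propose to extract a contradiction from a failure of genericity. The $\Delta$-system/ultrafilter pattern of Lemma~\ref{LA2} produces an uncountable family of pairwise $\rho$-close points, but here that family sits over an $\aleph_1$-sized, non-cofinal subset of $\Lambda$, so it is \emph{not} an $\e$-branch of $\bbF$ (branches must meet every level), and the hypothesis is not contradicted. Your proposed bridge via Lemma~\ref{LA1+} --- deriving $\e'$-branches in every cofinal subfamily for every $\e'>0$ --- does not follow from the failure of one master condition for one $\e$, and if it did it would prove far more than the hypothesis allows. The paper avoids both problems by splitting the construction: a $\sigma$-closed collapse of $|\Lambda|$ first adds the cofinal chain $f$ (countable conditions, so no interpolation problem), and the key point is a fusion argument showing this collapse adds no $\e/3$-branch of $T_f$: from a name $\dot B$ for such a branch one builds a binary tree of conditions in a countable $M$ with $\e$-separated decided values (this is where ``no $\e$-branches of $\bbF$'' is used, since the set of potential values of $\dot B$ meets every level of $\Lambda$), and then continuum many $(M,\bbP)$-generic branches decide pairwise $\e/3$-separated points of the single separable space $X_{\lambda(M)}$ --- the contradiction comes from separability, not from manufacturing a branch of $\bbF$. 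One then applies Lemma~\ref{LA2} inside the extension to get a ccc specializing poset and PFA to the (proper) two-step iteration. I recommend you restructure along those lines.
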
 

\begin{proof} Let $\bbP$ denote the $\sigma$-closed collapse of $|\Lambda|$ to $\aleph_1$. 
Then $\bbP$ forces that there is a strictly increasing, 
cofinal map $f\colon \omega_1\to \Lambda$. We first prove that $\bbP$ forces the Polish $\omega_1$-tree 
$T_f=(X_{f(\alpha)}, d_{f(\alpha)}, \pi_{f(\beta)f(\alpha)}, \alpha\leq \beta<\omega_1)$ 
has no $\e/3$-branches. 

Assume otherwise and let $\dot B$ be a name for an 
 $\e/3$-branch of $T_f$. 
Let $\theta=(2^{|\Lambda|})^+$ and let $M$ be a countable elementary 
submodel of $H_\theta$ containing $\bbF$, $\bbP$, and a name $\dot f$ for $f$. 
Let $\calD_n$,  for $n\in \bbN$, enumerate all dense open subsets of $\bbP$ that
belong to $M$. 
Pick conditions $\bp_s$, $x_s$ and $y_s$ for $s\in 2^{<\bbN}$,  
satisfying the following for all $s$. 
\begin{enumerate}
\item $\bp_s\geq \bp_t$ if $t$ extends $s$, 
\item $\bp_s\in M\cap \calD_n$, where $n=|s|$, 
\item $\bp_{s}\forces \check x_s\in \dot B$, 
\item $x_s\in M$, and 
\item $\rho(x_{s0},x_{s1})\geq \e$.
\end{enumerate}
These objects are chosen by recursion. If $\bp_s$ has been chosen, 
then the set $\{x\in \bbF: (\exists \bq\leq \bp_s) \bq\forces x\in \dot B\}$ 
is not an $\e$-branch and therefore we can choose $x_{s0}$ and $x_{s1}$ in this set
such that $\rho(x_{s0}, x_{s1})\geq \e$. 
Let $\bp_{s0}$ and $\bp_{s1}$ be (necessarily incompatible) extensions of $\bp_s$
forcing that $x_{s0}$ and $x_{s1}$, respectively, belong to $\dot B$. 
Since all the relevant parameters are in $M$, $\bp_{s0}$, $\bp_{s1}$, $x_{s0}$ and $x_{s1}$ 
can also be chosen to belong to $M$.

Since $\Lambda$ is $\sigma$-directed, 
let $\lambda(M)\in \Lambda$ be an upper bound for $M\cap \Lambda$.  
For each $g\in 2^{\bbN}$ let $\bp_g$ be $(M,\bbP)$-generic
 condition extending all $\bp_{g\rs n}$ 
and deciding $x_g\in X_{\lambda(M)}$ in $\dot B$. 
For $g\neq g'$ let $s$ be the longest common initial segment of $g$ and $g'$. 
We may assume $g$ extends $s0$ and $g'$ extends $s1$. 
Let $\alpha=\min(\Lev(x_{s0}), \Lev(x_{s1}))$ and let $y_0,y_1,x_0,x_1$ be the projections 
of $x_g, x_{g'}, x_{s0}$ and $x_{s1}$, respectively, to $X_\alpha$. 
Then 
\[
d_\alpha(y_0,y_1)\geq d_\alpha(x_0,x_1)-d_\alpha(y_0,x_0)-d_\alpha(y_1,x_1)\geq \e/3,
\]
and therefore $d_{\lambda(M)}(x_g, x_{g'})\geq \e/3$.  
This contradicts the assumed separability of $X_{\lambda(M)}$. 

Since $\bbP$ forces that $\bbF$ has no $\e/3$-branches, 
by Lemma~\ref{LA2} we have a $\bbP$-name for a ccc poset that $\e/6$-specializes 
$T_f$. By applying PFA to the iteration and an appropriate collection of dense
sets we obtain the desired conclusion. 
\end{proof}

Coherent families of 
discrete Polish spaces and their uniformization using PFA have been used in different contexts. 
See e.g., 
 \cite{To:Combinatorial} and \cite{Moo:PFA}.

   \section{Inner automorphisms} 
   \label{S.Inner} 
   In this short section we state and prove some well-known 
   results about inner automorphisms 
   of C*-algebras. Recall that for a partial isometry $v$ in algebra $A$ by $\Ad v$ 
 we denote the conjugation map $\Ad v (a)=vav^*$.

   \begin{lemma}\label{L00} Assume that unitaries  
   $v$ and $w$ in a C*-algebra $A$ are such that $\Ad v$ and $\Ad w$ 
   agree on $A$.    Then $vw^*\in \cZ(A)$. 
   \end{lemma}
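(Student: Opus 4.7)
The plan is to directly manipulate the identity $vav^{*}=waw^{*}$ (which is the hypothesis $\Ad v(a)=\Ad w(a)$) into a statement that says $vw^{*}$ commutes with every element of $A$.

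First I would observe that because $v$ and $w$ are unitaries, $v^{*}v=vv^{*}=1$ and $w^{*}w=ww^{*}=1$, so the map $b\mapsto w^{*}bw$ is a bijection of $A$ onto itself. Thus the assumption $vav^{*}=waw^{*}$ for all $a\in A$ can be rewritten, via the substitution $a=w^{*}bw$, as
\[
v(w^{*}bw)v^{*}=w(w^{*}bw)w^{*}=b
\]
for every $b\in A$. Setting $u=vw^{*}$, and noting that $u^{*}=wv^{*}$, the left-hand side is exactly $ubu^{*}$. So the identity collapses to $ubu^{*}=b$ for every $b\in A$, which, after right-multiplying by $u$, gives $ub=bu$ for every $b\in A$. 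That is the definition of $u=vw^{*}\in\cZ(A)$.

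There is essentially no obstacle here: the only thing to be careful about is the unitality of $A$ implicit in the very notion of a unitary (in the non-unital case one passes to the multiplier algebra, and the same calculation goes through since $vw^{*}$ lies in $A$ whenever $v,w$ do). The proof is one line of algebra once the correct substitution $a=w^{*}bw$ is made; alternatively one can multiply $vav^{*}=waw^{*}$ on the left by $v^{*}$ and on the right by $w$ to get $av^{*}w=v^{*}wa$, concluding $v^{*}w\in\cZ(A)$, and then use $vw^{*}=(v^{*}w)^{*}\cdot vv^{*}\cdot\text{(centrality)}$ to transfer the conclusion to $vw^{*}$. Either route is routine.
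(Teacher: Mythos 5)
Your main argument is correct and is essentially the same one-line algebraic manipulation the paper uses: the paper multiplies $vav^*=waw^*$ on the left by $w^*$ and the right by $v$ to get $w^*va=aw^*v$, concluding $w^*v\in\cZ(A)$; your substitution $a=w^*bw$ is the same move and has the small cosmetic advantage of producing $vw^*$ directly (the paper's version leaves implicit the step from $w^*v\in\cZ(A)$ to $vw^*\in\cZ(A)$, e.g.\ via noting that a central element commutes with $v$, so $(w^*v)v=v(w^*v)$ and hence $w^*v=vw^*$).

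One minor slip in your closing side-remark: from $av^*w=v^*wa$ you get $v^*w\in\cZ(A)$, but $(v^*w)^*=w^*v$, not $vw^*$, so the displayed formula $vw^*=(v^*w)^*\cdot vv^*\cdot(\text{centrality})$ does not parse as written. To pass from $v^*w$ (or $w^*v$) central to $vw^*$ central you still need a short argument such as the one above, or the observation that $\Ad v$ is an automorphism of $A$ and hence preserves $\cZ(A)$, so $vw^*=\Ad v(w^*v)\in\cZ(A)$. Your primary route avoids this issue entirely and is the cleaner of the two.
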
 
   
   \begin{proof} We have $vav^*=waw^*$ and therefore $w^*va =a w^*v$ for all $a\in A$. 
   \end{proof} 
   
   In the following $\dot a$ denotes the image of $a\in \cB(H)$ in the Calkin algebra under the 
 quotient map, not a forcing name. 
   
   \begin{lemma} \label{L01} If $v$ and $w$ in $\cB(H)$ are such that $\dot v$ and $\dot w$ 
   are unitaries in $\cCH$ and $(\Ad v)a -(\Ad w)a$ is compact for all $a\in \cBH$, 
   then there is $z\in \bbT$ such that $v-zw$ is compact. 
   \end{lemma}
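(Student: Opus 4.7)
The plan is to push everything to the Calkin algebra and exploit the fact that its center is trivial. The hypothesis $(\Ad v)a - (\Ad w)a \in \cK(H)$ for every $a \in \cB(H)$ says precisely that $\Ad \dot v$ and $\Ad \dot w$ agree on $\cC(H)$. Lemma~\ref{L00} applied inside $\cC(H)$ to the unitaries $\dot v, \dot w$ then yields $\dot v \dot w^* \in \cZ(\cC(H))$.

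Next I would invoke the fact that $\cZ(\cC(H)) = \bbC \cdot \dot 1$ for every infinite-dimensional Hilbert space $H$. For separable $H$ this is immediate from simplicity of $\cC(H)$. For non-separable $H$ one first rules out nontrivial central projections: if a projection $P$ has both $P(H)$ and $(1-P)(H)$ infinite-dimensional, one builds a partial isometry $S$ sending orthonormal sequences in $P(H)$ onto orthonormal sequences in $(1-P)(H)$, and then $PS-SP$ is manifestly non-compact. A similar commutator argument, using partial isometries between two spectral subspaces of a self-adjoint lift $T$ of a central element, shows that $\sigma_{\mathrm{ess}}(T)$ must be a single point, hence $[T]$ is a scalar. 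Granted this, $\dot v \dot w^* = z \cdot \dot 1$ for some $z \in \bbC$, and since it is unitary in $\cC(H)$, $|z|=1$, i.e.\ $z \in \bbT$.

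Finally I lift $\dot v \dot w^* = z \dot 1$ to $\cB(H)$: there is a compact $k$ with $vw^* = z \cdot 1 + k$. Because $\dot w$ is unitary in $\cC(H)$, the operator $1 - w^* w$ is compact, so
\[
v - zw = v(1 - w^* w) + (vw^* - z \cdot 1)w = v(1 - w^*w) + kw
\]
is a sum of two compact operators, proving $v - zw \in \cK(H)$. The only genuine obstacle in this outline is establishing $\cZ(\cC(H)) = \bbC\dot 1$ in the non-separable case; once that is in hand, the rest is routine manipulation in the quotient.
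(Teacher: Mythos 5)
Your proposal is correct and follows essentially the same path as the paper: apply Lemma~\ref{L00} to reduce to $\dot v\dot w^*\in\cZ(\cCH)$, prove $\cZ(\cCH)=\bbC\dot 1$ via a non-compact partial isometry between approximate spectral subspaces, and read off $v-zw\in\cK(H)$. The only cosmetic differences are that you spell out the final lifting identity and split the center argument into the projection case before the general self-adjoint case, whereas the paper handles the self-adjoint case directly.
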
 
   
   \begin{proof} We first check (a well-known fact)  that $\cZ(\cCH)=\bbC$. Since 
 it is a C*-algebra, it suffices to see that the only self-adjoint elements of $\cZ(\cCH)$
 are scalar multiples of the identity.  Assume $\dot a$ is self-adjoint and its essential 
 spectrum is not a singleton, say it contains 
 some $\lambda_1<\lambda_2$. Fix $\e<|\lambda_1-\lambda_2|/3$. 
 In $\cBH$ fix  infinite-dimensional projections $p$ and $q$
 such that $\|pap - \lambda_1 p\|<\e$ and $\|qaq-\lambda_2 q\|<\e$.  
 A noncompact partial isometry $v$ such that $vv^*\leq p$ and $v^*v\leq q$ 
 clearly does not commute with $a$ modulo the compacts.

   By Lemma~\ref{L00} applied to $\dot v$ and $\dot w$ 
   and the above there is a scalar $z$ such that $z\dot v=\dot w$, 
   as required. 
   \end{proof}

  \begin{lemma} \label{L03} Assume $H$ is an infinite-dimensional  
  Hilbert space and $\Phi$ and $\Psi$ are automorphisms of $\cCH$
  that agree on the corner $\dot p \cCH \dot p$ for every  projection $p\in \cBH$ with separable range. 
  Then $\Phi=\Psi$. 
  \end{lemma}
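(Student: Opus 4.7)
The plan is to reduce to showing the automorphism $\Theta := \Psi^{-1} \circ \Phi$ is the identity of $\cCH$. Since $\Phi$ and $\Psi$ agree pointwise on each corner $\dot p\cCH\dot p$ with $p$ a separable-range projection, $\Theta$ is the identity on each such corner. In particular $\Theta(\dot p) = \dot p$, and given any $a \in \cBH$ and any separable-range projection $p$ the element $\dot p\dot a\dot p$ lies in that corner, so $\dot p\,\Theta(\dot a)\,\dot p = \dot p\dot a\dot p$. Writing $c := \Theta(\dot a) - \dot a$, the whole statement reduces to the claim that whenever $c \in \cCH$ satisfies $\dot p\,c\,\dot p = 0$ for every separable-range projection $p$, then $c = 0$.

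I would prove this last claim by contrapositive. Suppose $c \neq 0$, pick a non-compact lift $\hat c \in \cBH$, and apply spectral theory to the non-compact positive operator $\hat c^*\hat c$: there is $\delta > 0$ for which the spectral projection $E := e_{[\delta,\infty)}(\hat c^*\hat c)$ has infinite-dimensional range. Choose any separable infinite-dimensional subprojection $P \leq E$. Then $P\hat c^*\hat cP \geq \delta P$, so $\hat cP$ is bounded below by $\sqrt\delta$ on $\range(P)$ and therefore has infinite-dimensional closed range; in particular $\hat cP$ is non-compact.

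Let $Q$ be the projection onto the closure of $\hat c\cdot \range(P)$, which is separable because $\range(P)$ is. Setting $R := P \vee Q$, which is still separable-range, one has $RP = P$ and $QR = Q$, hence
\[
\dot Q\,(\dot R\, c\, \dot R)\,\dot P \;=\; \dot Q\, c\, \dot P,
\]
and the right-hand side is the Calkin class of $Q\hat cP = \hat cP$, which is non-zero since $\hat cP$ is non-compact. Consequently $\dot R\,c\,\dot R \neq 0$, contradicting the standing hypothesis applied to the separable-range projection $R$.

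The main obstacle is precisely this last claim: spectral theory delivers only a one-sided cut-down $\hat cP$ witnessing non-compactness, whereas the hypothesis controls only two-sided cut-downs $\dot p c\dot p$. The passage between the two is effected by replacing $P$ by its join with the range projection $Q$ of $\hat cP$. In the separable case one could simply take $p = 1$ and bypass this construction, but in the non-separable setting the lattice of separable-range projections does not contain the identity of $\cBH$, so this join trick is what makes the argument work.
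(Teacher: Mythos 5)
Your proof is correct and follows essentially the same route as the paper: both reduce the statement to the fact that a nonzero element of $\cCH$ has a noncompact two-sided compression by some separable-range projection, and then invoke the corner hypothesis for that projection. The only differences are cosmetic — you first compose with $\Psi^{-1}$ so that projections are genuinely fixed (the paper instead pulls the witnessing projection $r$ back through $\Phi$), and you spell out the $P\vee Q$ argument for the witnessing step, which the paper simply asserts.
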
 
  
  \begin{proof} We may assume $H$ is nonseparable. 
  Assume the contrary and let $a\in \cBH$ be such that $\dot b=\Phi(\dot a)-\Psi(\dot a)\neq 0$. 
  Let $r$ be a  projection with  separable range 
  such that $rbr$ is not compact and 
  let $p$ be such that $\Phi(\dot p)=\dot r$. By our assumption, 
  $\Psi(\dot p)=\dot r$. Also 
  $\dot r\Psi(\dot a)\dot r=\Psi(\dot p \dot a \dot p)= \Phi(\dot p \dot a \dot p)=\dot r \Phi(\dot a)\dot r$, 
  contradicting the choice of $a$. 
  \end{proof}

   \section{Part I of the proof of Theorem~\ref{T0}: Trees $T$ and $T[a]$} 
\label{S.Proof.I}
   Let $H$ denote $\ell_2(\aleph_1)$. 
Throughout this section we    
assume $\Phi$ is an automorphism of $\cCH$ and $\Phi_*\colon \cBH\to \cBH$ 
   is its \emph{representation}, i.e., any map such that 
   the diagram 
   $$
\diagram
\cBH \rto ^{\Phi_*} \dto & \cBH \dto \\
\cCH \rto^\Phi & \cCH 
\enddiagram
$$
commutes. 
Since every projection in $\cCH$ lifts to a projection in $\cBH$ (\cite[Lemma 3.2]{We:Set})
we may assume $\Phi_*$ maps projections to projections. 

\begin{lemma} \label{L0} 
   If $p$ is a projection in $\cBH$ with  separable range, then $\Phi_*(p)$ is a
   projection with separable range and 
   $\Phi(\dot p \cCH \dot p)=
   \Phi(\dot p)\cCH \Phi(\dot p)$. 
\end{lemma}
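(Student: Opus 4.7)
The plan is to address the two claims of the lemma separately.  The identity $\Phi(\dot p\cCH\dot p)=\Phi(\dot p)\cCH\Phi(\dot p)$ is the easy half: because $\Phi$ is a surjective $*$-homomorphism, $\Phi(\dot p\dot a\dot p)=\Phi(\dot p)\Phi(\dot a)\Phi(\dot p)$, and as $\dot a$ ranges over $\cCH$ the element $\Phi(\dot a)$ already exhausts $\cCH$ by surjectivity.  The substantive content is the assertion that $\Phi_*(p)$ has separable range, and my plan is to prove this by characterizing separable-range projections algebraically inside $\cCH$ via an ideal-theoretic invariant.

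The key input I would use is the classification of closed two-sided ideals in $\cBH$ for $H=\ell_2(\aleph_1)$: the only such ideals are $\{0\}$, $\cKH$, the ideal $\cJ$ of operators with separable range, and $\cBH$ itself.  The nonexistence of any closed ideal strictly between $\cKH$ and $\cJ$, or between $\cJ$ and $\cBH$, is a routine generation argument --- any non-compact operator with separable range generates $\cJ$ via pre- and post-composition with partial isometries between separable subspaces, and any operator with non-separable range similarly generates $\cBH$.  Quotienting by $\cKH$, the Calkin algebra $\cCH$ therefore admits a unique proper nontrivial closed two-sided ideal, namely $\cJ/\cKH$.

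Next I would invoke the standard fact that a $*$-automorphism of a C*-algebra is isometric and induces a bijection of the lattice of closed two-sided ideals.  Since $\Phi$ is forced to fix $\{0\}$ and $\cCH$, it must also fix $\cJ/\cKH$.  A projection $r\in\cBH$ lies in $\cJ$ precisely when $rH$ is separable, and whether a self-adjoint element belongs to $\cJ/\cKH$ depends only on its class modulo $\cKH$.  Applying this to $p$ and to any projection lift of $\Phi(\dot p)$ such as $\Phi_*(p)$, we obtain $pH$ separable iff $\dot p\in\cJ/\cKH$ iff $\Phi(\dot p)\in\cJ/\cKH$ iff $\Phi_*(p)H$ is separable.

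The main obstacle in this approach is essentially just citing or briefly justifying the ideal classification for $\cB(\ell_2(\aleph_1))$; no combinatorial set theory or machinery from \S\ref{S.Polish} is used at this stage, which fits with the lemma sitting at the outset of the proof, before the delicate tree-theoretic analysis begins.  An alternative route via Murray--von Neumann equivalence of projections (classifying MvN classes in $\cCH$ by the Hilbert space dimension of the range and noting that $\Phi$ fixes the class of the identity) is also available, but the ideal-theoretic formulation seems cleanest.
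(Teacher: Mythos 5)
Your proposal is correct and rests on the same idea as the paper's proof: the paper characterizes separable-range projections as exactly those whose images generate the minimal nontrivial closed ideal of $\cC(H)$ (which for $H=\ell_2(\aleph_1)$ is your $\cJ/\cK(H)$, the unique proper nontrivial ideal), and concludes by automorphism-invariance of that ideal. The only cosmetic difference is in the second claim, where the paper invokes preservation of hereditary subalgebras while you use the (equally valid, arguably more direct) multiplicativity-plus-surjectivity computation.
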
 

\begin{proof}  Since a nonzero projection in $\cCH$ generates the minimal nontrivial 
ideal of $\cC(H)$  
if and only if it is of the form $\dot q$ for some $q$ with a separable range, 
the first claim follows.  
For the second part note that  $A=\dot p \cCH\dot p$ is a hereditary subalgebra (i.e., 
if $0\leq a\leq b$ for $a\in \cC(H)$ and $b\in A$, then $a\in A$) 
and therefore $\Phi$ maps it to a hereditary subalgebra. 
\end{proof}

\subsection{Localization} \label{S.Setup}
A straightfoward recursive construction produces  
an increasing family of projections with separable range 
 $p_\alpha$, $\alpha<\omega_1$
in $\cB(H)$ such that 
\begin{enumerate}
\item $\bigvee_{\alpha<\omega_1} p_\alpha=1$ and for a limit 
$\delta$ we have $p_\delta=\bigvee_{\alpha<\delta} p_\alpha$, 
\item \label{L.P0} $p_0$ and each $p_{\alpha+1}-p_\alpha$ are noncompact, 
\item \label{L.P1} for some projection $r_\alpha$ such that $\dot r_\alpha=\Phi(\dot p_\alpha)$ 
we have $p_\alpha\leq r_{\alpha+1}$ 
and $r_\alpha\leq p_{\alpha+1}$. 
\pushcounter
\end{enumerate}
For convenience we write $p_{-1}=0$. For each $\alpha$ fix a basis of the range 
$p_{\alpha+1}-p_\alpha$ and enumerate it as $e_\beta$, 
for $\alpha\cdot\omega\leq \beta<(\alpha+1)\cdot \omega$. 
We therefore have a 
 basis $(e_\alpha)_{\alpha<\omega_1}$ for $H$ such that 
\begin{enumerate}
\popcounter
\item \label{L.P3} $p_\alpha$ is the closed linear span of $\{e_\beta: \beta<\alpha\cdot \omega\}$. 
\pushcounter
\end{enumerate}
For every $\alpha<\omega_1$
 Lemma~\ref{L0} implies that the 
  restriction of $\Phi$ to $\dot p_\alpha \cCH \dot p_\alpha$
is an isomorphism between Calkin algebras associated with separable Hilbert spaces, 
$p_\alpha[H]$ and $r_\alpha[H]$. 
  Therefore by  Theorem~\ref{T2}   we can fix a partial isometry $v_\alpha$ such that 
\begin{enumerate}
\popcounter
\item \label{I.v-alpha} $v_\alpha v_\alpha^*\leq r_\alpha$, $v_\alpha^* v_\alpha\leq 
p_\alpha$, and
$\Ad v_\alpha$ is a representation of $\Phi$ on $\dot p_\alpha \cCH \dot p_\alpha$. 
\pushcounter
\end{enumerate}
For each $\alpha>1$ by Lemma~\ref{L01} we can find 
 $z_\alpha\in \bbT$ such that $v_0-z_\alpha v_\alpha p_0$
is compact. Replace $v_\alpha$ with $z_\alpha v_\alpha$ and note that $\Ad v_\alpha$
still satisfies \eqref{I.v-alpha}. Let us prove  
 that in addition (with $a\eqK b$ standing for  
`$a-b$ is compact')
\begin{enumerate}
\popcounter
\item $v_\alpha\eqK v_\beta p_\alpha$ whenever $\alpha<\beta$. 
\pushcounter
\end{enumerate} 
By Lemma~\ref{L01}, there is $z\in \bbT$ such that $v_\alpha-z v_\beta p_\alpha$
is compact. 
Since $p_0$ is non-compact and since  $v_\alpha p_0\eqK v_0 \eqK v_\beta p_0$, 
we must have $z=1$. 

For $a\in \cBH$ define the \emph{support} of $a$ as
\[
\supp(a)=\{\alpha<\omega_1: \|ae_\alpha\|>0\text{ or } \|a^* e_\alpha\|>0\}. 
\]
All compact operators are countably supported and the set of finitely supported operators
is a dense subset of $\cKH$. 
   An easy analogue of the $\Delta$-system lemma (e.g., \cite[Theorem~II.1.5]{Ku:Book}) 
  is worth stating explicitly (here $H=\ell^2(\aleph_1)$ and $p_\alpha$ are as in \eqref{L.P3}). 
     
     \begin{lemma}\label{L.Delta} 
     Assume $a_\alpha$, $\alpha<\omega_1$, belong to $\cKH$. 
  Then for every $\e>0$ there is a stationary $X\subseteq \omega_1$,
   a finitely supported 
  projection
  $r$, 
    and an operator 
  $a$ such that $rar=a$ and 
\begin{enumerate}
\item [(a)]   $\|p_\alpha (r a_\alpha r -a_\alpha) p_\alpha\|<\e$ for all $\alpha\in X$, 
\item  [(b)]   $\|p_\alpha (a-a_\alpha) p_\alpha\|<\e$ for all $\alpha\in X$, and 
\item[(c)] 
 $\|p_\alpha a_\alpha p_\alpha - p_\beta a_\beta p_\beta \|<2\e$ for all $\alpha<\beta$ in $X$. 
\end{enumerate}
  \end{lemma}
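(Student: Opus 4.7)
The plan is a classical $\Delta$-system argument adapted to the metric setting of compact operators.

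First I dispatch routine reductions. Since a countable union of nonstationary subsets of $\omega_1$ is nonstationary, partitioning $\omega_1$ according to $\lceil\|a_\alpha\|\rceil$ allows me to assume, on a stationary set, a uniform norm bound $\|a_\alpha\|\leq M$. Using that finitely supported operators are dense in $\cK(H)$, I choose for each such $\alpha$ a finitely supported $b_\alpha$ with $\|a_\alpha-b_\alpha\|<\e/4$, writing $F_\alpha\subseteq\omega_1$ for its finite support and $r_\alpha$ for the projection onto $\Span\{e_\beta:\beta\in F_\alpha\}$. The classical $\Delta$-system lemma \cite[Theorem~II.1.5]{Ku:Book} applied to $\{F_\alpha\}$ then yields a stationary $X_2$ with common root $F$ and pairwise disjoint petals $F_\alpha^{*}:=F_\alpha\setminus F$; let $r$ be the projection onto $\Span\{e_\beta:\beta\in F\}$.

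The key technical step is arranging that $F_\alpha^{*}\cap[0,\alpha\cdot\omega)=\emptyset$ on a stationary refinement, so that $p_\alpha$ annihilates $r_\alpha^{*}:=r_\alpha-r$. Because the petals are pairwise disjoint, the partial function $\pi\colon\omega_1\to\omega_1$ sending $\gamma$ to the unique $\alpha$ with $\gamma\in F_\alpha^{*}$ is well-defined; a standard closing-off argument under $\pi$ together with the map $\alpha\mapsto\alpha\cdot\omega$ produces a club $C$ such that for $\alpha\in X_3:=X_2\cap C$ no $\gamma<\alpha\cdot\omega$ satisfies $\pi(\gamma)=\alpha$. Restricting further to the stationary tail on which $F\subseteq[0,\alpha\cdot\omega)$ secures $p_\alpha r=r$. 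Decomposing $b_\alpha=rb_\alpha r+rb_\alpha r_\alpha^{*}+r_\alpha^{*}b_\alpha r+r_\alpha^{*}b_\alpha r_\alpha^{*}$ and sandwiching between $p_\alpha$'s then collapses three terms and yields $p_\alpha b_\alpha p_\alpha=rb_\alpha r$; combined with $\|a_\alpha-b_\alpha\|<\e/4$, this gives (a) via the triangle inequality.

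Finally, $\{ra_\alpha r:\alpha\in X_3\}$ is a norm-bounded family in the finite-dimensional C*-algebra $r\cB(H)r$, hence lies in a compact subset. Covering this subset by finitely many balls of radius $\e/4$ and pigeonholing on the stationary set yields a stationary $X\subseteq X_3$ together with $a\in r\cB(H)r$ satisfying $rar=a$ and $\|ra_\alpha r-a\|<\e/4$ for all $\alpha\in X$. Since $p_\alpha ap_\alpha=a$, combining with the estimate from (a) yields (b) by the triangle inequality, and (c) is then immediate from (b). The main obstacle is the closing-off in the second paragraph, which synchronizes the $\Delta$-system decomposition of the supports with the ordinal-multiplicative structure of the filtration $(p_\alpha)$.
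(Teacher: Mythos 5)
Your argument is correct and reaches the stated conclusion, but it takes a more roundabout route than the paper's. The paper approximates the \emph{compressions} $p_\alpha a_\alpha p_\alpha$ (rather than $a_\alpha$ itself) by finitely supported operators with complex-rational coefficients supported inside $p_\alpha$; since those supports automatically live below $\alpha\cdot\omega$, a single pressing-down plus a counting argument (only countably many rational matrices on a fixed finite support) stabilizes both the support $S$ and the operator $a$ exactly, and no $\Delta$-system or synchronization with the filtration is needed. Because you approximate the uncompressed $a_\alpha$, whose approximant may be supported anywhere in $\omega_1$, you are forced into the extra machinery of disjoint petals and the closing-off under $\pi$ and $\alpha\mapsto\alpha\cdot\omega$; this works, and your finite-dimensional compactness argument in $r\cBH r$ is a legitimate substitute for the paper's rational-coefficients counting (it costs you only an extra $\e/4$, which your bookkeeping absorbs). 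The one step you should tighten is the claim that Kunen's $\Delta$-system lemma returns a \emph{stationary} $X_2$: as cited it only gives an uncountable subfamily, and the lemma's conclusion requires stationarity. The standard fix is in the spirit of your own closing-off step: by Fodor stabilize $\max(F_\alpha\cap\alpha)$ and then, by counting finite subsets of a countable ordinal, stabilize $F_\alpha\cap\alpha=F$ on a stationary set; intersecting with the club of $\beta$ closed under $\alpha\mapsto\max F_\alpha$ makes the petals $F_\alpha\setminus\alpha$ pairwise disjoint while preserving stationarity, after which your $\pi$ is well defined and the rest of the proof goes through unchanged.
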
 
  
  \begin{proof} 
  For $a_\alpha$ find a finitely supported $b_\alpha$ with complex rational coefficients 
  with support in $p_\alpha$
  such that $\|p_\alpha(a_\alpha-b_\alpha) p_\alpha\|<\e/2$. 
By the Pressing Down Lemma (\cite[Theorem~II.6.15]{Ku:Book}) we can find a stationary set $X_0$ such that all $b_\alpha$ 
with $\alpha\in X_0$ have the same support, $S$. Let $r$ be the projection to $\Span\{e_i: i\in S\}$.  
By a counting argument we can refine $X_0$ further and find $a$. 
The third inequality is an immediate consequence of the second. 
    \end{proof}

\subsection{The tree $T$}\label{S.T}
For $\alpha<\omega_1$ let (with $r_\alpha$ and $p_\alpha$ as in \eqref{L.P1} of \S\ref{S.Setup})
\[
X_{\alpha}=
\{r_{\alpha+1} w p_\alpha: w\in \cBH, w\eqK v_\alpha\}.
\]
Note the `extra room' provided by defining $X_\alpha$ in this way instead of 
the apparently more natural $\{r_{\alpha} w p_\alpha: w\in \cBH, w\eqK v_\alpha\}$.
Let us prove a few properties of $X_\alpha$. 
\begin{enumerate}
\popcounter
\item  $X_{\alpha}$ is a norm-separable complete metric space.  


\item \label{L.pi} 
If $\alpha<\beta$  then the map 
$\pi_{\beta\alpha}\colon X_{\beta}\to X_{\alpha}$
defined by 
\[
\pi_{\beta\alpha}(w)=r_{\alpha+1} w p_\alpha
\]
is a surjection and a contraction.
\pushcounter
\end{enumerate}
Only the latter property requires a proof. 
It is clear that the range of $\pi_{\beta\alpha}$ is included in $X_{\alpha}$ and 
that the map is contraction. For $u\in X_{\alpha}$ 
let $w=v_\beta+u-r_{\alpha+1} v_{\beta} p_\alpha$. 
Then $w-v_\beta$ is compact since $u\in X_\alpha$ and clearly 
$r_{\alpha+1} w p_\alpha = r_{\alpha+1} u p_\alpha=u$.

Consider the Polish $\omega_1$-tree $T$ with levels $X_\alpha$ 
and connecting maps $\pi_{\alpha\beta}$. 

\begin{lemma}  \label{L.Phi} The following are equivalent. 
\begin{enumerate} 
\popcounter
\item\label{L.Phi.1} $\Phi$ is inner. 
\item\label{L.Phi.2}    There is a  $v\in \cBH$ such that $\dot v$ is a unitary in $\cCH$ 
   and for all  $\alpha<\omega_1$ 
we have $r_{\alpha+1} v p_\alpha\in X_{\alpha}$. 
\item \label{L.Phi.3} \label{L.Phi.x} 
$T$ has a branch. 

\pushcounter
\end{enumerate}
\end{lemma}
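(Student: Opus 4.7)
The plan is to prove the three implications (1) $\Rightarrow$ (2), (2) $\Rightarrow$ (3), and (3) $\Rightarrow$ (1), with the first two being bookkeeping on top of the setup from \S\ref{S.Setup} and the last being the substantive construction.

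For (1) $\Rightarrow$ (2), I lift the implementing unitary to some $v \in \cBH$ with $\dot v$ unitary in $\cCH$. For each $\alpha$, both $\Ad v$ and $\Ad v_\alpha$ represent $\Phi$ on the corner $\dot p_\alpha \cCH \dot p_\alpha$, so arguing as in Lemma~\ref{L01} inside that corner (which is isomorphic to the Calkin algebra of the separable Hilbert space $p_\alpha H$) gives $v p_\alpha \eqK z_\alpha v_\alpha$ for some $z_\alpha \in \bbT$. Replacing $v$ by $\bar z_0 v$ normalizes $z_0 = 1$, and the identity $v_\alpha p_0 \eqK v_0$ arranged in \S\ref{S.Setup}, together with the non-compactness of $v_0$, then forces $z_\alpha = 1$ for every $\alpha$. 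Thus $v p_\alpha \eqK v_\alpha$, and taking $w := v p_\alpha$ in the definition of $X_\alpha$ shows $r_{\alpha+1} v p_\alpha \in X_\alpha$. For (2) $\Rightarrow$ (3), set $w_\alpha := r_{\alpha+1} v p_\alpha \in X_\alpha$; then $\pi_{\beta\alpha}(w_\beta) = r_{\alpha+1} r_{\beta+1} v p_\alpha$, which equals $w_\alpha$ once the $r_\alpha$ are chosen strictly nested, an arrangement easily achieved in the recursion of \S\ref{S.Setup}. Hence $(w_\alpha)_{\alpha<\omega_1}$ is a branch of $T$.

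The substantive step is (3) $\Rightarrow$ (1). Given a branch $(w_\alpha)$, I construct $v \in \cBH$ satisfying $v p_\alpha \eqK v_\alpha$ for every $\alpha$. The key point is that for $\xi \in p_\alpha H$ and $\alpha \leq \beta \leq \beta'$ the branch relation $r_{\beta+1} w_{\beta'} p_\beta = w_\beta$ implies $r_{\beta+1} w_{\beta'} \xi = w_\beta \xi$, so the successive increments $w_{\beta'}\xi - w_\beta\xi$ lie in the mutually orthogonal subspaces $(r_{\beta'+1} - r_{\beta+1}) H$. This lets one define $v\xi := \lim_\beta w_\beta \xi$ on the dense subspace $\bigcup_\alpha p_\alpha H$ and extend by continuity. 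Given such $v$, the element $\dot v$ is unitary in $\cCH$ because $\dot v \dot p_\alpha = \dot v_\alpha$ and $\bigvee_\alpha \dot p_\alpha = 1 = \bigvee_\alpha \dot r_\alpha$, and Lemma~\ref{L03} then gives $\Ad \dot v = \Phi$ since both agree with $\Ad \dot v_\alpha$ on each corner $\dot p_\alpha \cCH \dot p_\alpha$.

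The main obstacle is ensuring the constructed $v$ is bounded in step (3) $\Rightarrow$ (1): each $w_\alpha$ equals $v_\alpha$ only modulo a compact of possibly large norm, so the telescoping limit $\lim_\beta w_\beta\xi$ need not converge and the resulting operator need not be bounded. Overcoming this requires either first extracting a norm-bounded branch (replacing each $w_\alpha$ by another element of $X_\alpha$ within the same $\eqK$-class while preserving the strict coherence $\pi_{\beta\alpha}(w_\beta) = w_\alpha$) or a more delicate transfinite construction exploiting the orthogonality of the level-increments to gain uniform norm control directly from the branch structure.
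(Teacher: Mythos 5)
Your proof follows the same route as the paper's: the paper treats the equivalence of (2) and (3) as immediate and proves (1)$\Rightarrow$(2) exactly as you do (lift the implementing unitary, compare with $v_\alpha$ on each corner via Lemma~\ref{L01}, and use $v_\alpha p_0\eqK v_0$ to synchronize the phases), and it proves (3)$\Rightarrow$(1) by passing from a branch to an operator $v$ and invoking Lemma~\ref{L03}. You are in fact more explicit than the paper, which simply ``fixes a $v$ that defines a branch'' without constructing it. Your point about nesting of the $r_\alpha$ in (2)$\Rightarrow$(3) is a genuine wrinkle; note that the setup already gives $r_\alpha\leq p_{\alpha+1}\leq r_{\alpha+2}$, so the $r_\alpha$ are nested with step two, which is enough (or one strengthens the recursion as you suggest).

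The one step you leave open --- boundedness of the $v$ built from a branch --- is a real omission in your write-up but closes in two lines, and neither of the escape routes you propose is needed. Since each $\pi_{\beta\alpha}$ is a contraction, the coherence relation $w_\alpha=r_{\alpha+1}w_\beta p_\alpha$ gives $\|w_\alpha\|\leq\|w_\beta\|$ for $\alpha<\beta$, so $\alpha\mapsto\|w_\alpha\|$ is non-decreasing; a non-decreasing $\omega_1$-sequence of reals is bounded (otherwise choose $\alpha_n$ with $\|w_{\alpha_n}\|>n$ and evaluate at $\alpha=\sup_n\alpha_n<\omega_1$ to get $\|w_\alpha\|=\infty$). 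Hence every branch is automatically uniformly norm-bounded, say by $C$. Your own observation that $r_{\beta+1}w_{\beta'}\xi=w_\beta\xi$ for $\xi\in p_\alpha H$ then yields the Pythagorean identity $\|w_{\beta'}\xi\|^2=\|w_\beta\xi\|^2+\|w_{\beta'}\xi-w_\beta\xi\|^2$ (no nesting of the $r$'s is needed for this, since $w_\beta=r_{\beta+1}w_\beta$), so $\beta\mapsto\|w_\beta\xi\|^2$ is non-decreasing and bounded by $C^2\|\xi\|^2$, the net $(w_\beta\xi)_\beta$ is Cauchy, and the limit defines $v$ with $\|v\|\leq C$ and $r_{\alpha+1}vp_\alpha=w_\alpha$. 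With that inserted, your argument is complete and matches the paper's.
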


\begin{proof}  
Clearly \eqref{L.Phi.2} and \eqref{L.Phi.3} are equivalent,  hence 
it suffices to prove \eqref{L.Phi.1} implies \eqref{L.Phi.2} and that \eqref{L.Phi.2} implies 
\eqref{L.Phi.3}. 
Assume $\Phi$ is inner and  $v$ implements  it. 
Then by Lemma~\ref{L01} for every $\alpha<\omega_1$ 
there is $z_\alpha\in \bbT$ such that $z_\alpha vp_\alpha-v_\alpha$ is compact. 
Since $v_\alpha p_0-v_0$ is compact for each $\alpha$ and $p_0$ is noncompact, 
we have $z_\alpha=z_0$ for all $\alpha$. Therefore $z_0 v$ defines a branch 
of $T$. 

Now assume \eqref{L.Phi.x} and fix   a $v $ that defines a branch of $T$. 
Then the automorphism of $\cCH$  
with representation $\Ad v$  agrees with $\Phi$ on the ideal of all operators with separable
range. By Lemma~\ref{L03}, this automorphism agrees with $\Phi$ 
on all of $\cCH$, hence \eqref{L.Phi.1} follows. 
\end{proof} 

A minor modification of the proof that \eqref{L.Phi.2} implies \eqref{L.Phi.3} above
gives an another equivalent reformulation of $\Phi$ being inner. Although we shall not need
it, it deserves mention: 
\begin{enumerate}
\popcounter
\item Every subtree of $T$ has a branch. 
\pushcounter
\end{enumerate}
We proceed with the analysis of $T$ and the corresponding `local trees' $T[a]$. 

For $b\in \cBH$ and $\alpha<\omega_1$ let
\[
\textstyle Z[b]_\alpha=\{p_\alpha wbw^* p_\alpha: w\in X_{\alpha+1}\}. 
\]
Then for every $c\in Z[b]_\alpha$ we have
 $ p_\alpha \Phi_*(b)p_\alpha\eqK c$
 because 
 \begin{align*}
 p_\alpha w b w^* p_\alpha & 
 \eqK p_\alpha v_{\alpha+1} p_{\alpha+1} b p_{\alpha+1} v_{\alpha+1}^* p_\alpha\\
 &\eqK p_\alpha \Phi_*(p_{\alpha+1} b p_{\alpha+1} ) p_\alpha\\
 & \eqK p_\alpha r_{\alpha+1} \Phi_* (b) r_{\alpha+1} p_\alpha\\
 & \eqK p_\alpha \Phi_*(b) p_\alpha. 
 \end{align*}
  Also, for $\alpha<\beta$ the map $\varpi_{\beta\alpha}^b$ (denoted 
   $\varpi_{\beta\alpha}$ when $b$ is 
  clear from the context)  from $Z[b]_\beta$ to $Z[b]_\alpha$ 
 defined by 
 \[
 \varpi_{\beta\alpha}(c)= p_\alpha c p_\alpha
 \]
 is clearly a contractive surjection.

For $a\in \cBH$ let  $T[a]$ denote the Polish $\omega_1$-tree
with levels $Z[a]_\alpha$ and commuting projections $\varpi_{\beta\alpha}$. 
 By `subtree' we always mean a downwards closed subtree of height~$\omega_1$. 
 
\begin{lemma} \label{L.branch} 
For every $a\in \cBH$ every  subtree $S$ of $T[a]$ has a branch. 
\end{lemma}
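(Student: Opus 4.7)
The plan is to apply Lemma~\ref{LA1} to $S$. Since any subtree of $S$ is also a subtree of $T[a]$, it suffices to show that every subtree of $T[a]$ admits an $\e$-branch for every $\e>0$. Fix $\e>0$ and such an $S$, and pick $s_\alpha\in S\cap Z[a]_\alpha$ for each $\alpha<\omega_1$.

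The crucial input is the displayed calculation immediately preceding the statement: every element of $Z[a]_\alpha$ is a compact perturbation of $p_\alpha\Phi_*(a)p_\alpha$. Setting $k_\alpha := s_\alpha - p_\alpha\Phi_*(a)p_\alpha$, each $k_\alpha$ is therefore compact, and since both $s_\alpha$ and $p_\alpha\Phi_*(a)p_\alpha$ are compressed by $p_\alpha$, one has $p_\alpha k_\alpha p_\alpha = k_\alpha$. Apply the compact $\Delta$-system Lemma~\ref{L.Delta} to the family $(k_\alpha)_{\alpha<\omega_1}$ with parameter $\e/2$; this yields a stationary $X\subseteq\omega_1$ such that
\[
\|k_\alpha - p_\alpha k_\beta p_\alpha\| < \e
\]
for all $\alpha<\beta$ in $X$ (via the triangle inequality applied to clause~(b), using $p_\alpha\le p_\beta$ and $p_\alpha k_\alpha p_\alpha = k_\alpha$).

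To fill in the remaining levels, set $\nu(\gamma) := \min(X\setminus\gamma)$ and
\[
c_\gamma := \pi_{\nu(\gamma),\gamma}(s_{\nu(\gamma)}) = p_\gamma s_{\nu(\gamma)} p_\gamma.
\]
Closure of $S$ under projection maps gives $c_\gamma\in S\cap Z[a]_\gamma$. For $\alpha<\beta$ the chain $p_\alpha\le p_\beta\le p_{\nu(\alpha)}\le p_{\nu(\beta)}$ causes the two copies of $p_\alpha\Phi_*(a)p_\alpha$ to cancel, and a short computation reduces $\rho(c_\alpha,c_\beta)$ to $\|p_\alpha(k_{\nu(\beta)} - k_{\nu(\alpha)})p_\alpha\|$. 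The monotonicity $\|p_\alpha x p_\alpha\|\le\|p_{\nu(\alpha)} x p_{\nu(\alpha)}\|$ together with $p_{\nu(\alpha)} k_{\nu(\alpha)} p_{\nu(\alpha)} = k_{\nu(\alpha)}$ then bounds this by $\|k_{\nu(\alpha)} - p_{\nu(\alpha)} k_{\nu(\beta)} p_{\nu(\alpha)}\|$, which is $<\e$ if $\nu(\alpha)<\nu(\beta)$ and is $0$ if $\nu(\alpha)=\nu(\beta)$. Hence $(c_\gamma)_{\gamma<\omega_1}$ is an $\e$-branch of $S$, and Lemma~\ref{LA1} applied to $S$ delivers a genuine branch.

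I do not foresee a serious obstacle: the only substantive ingredient is the compact $\Delta$-system Lemma~\ref{L.Delta}, which has already been isolated, and everything else is bookkeeping with the compressing projections $p_\alpha$. No additional set-theoretic hypothesis beyond ZFC is needed for this step.
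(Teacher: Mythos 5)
Your proof is correct and follows essentially the same route as the paper's: both apply the $\Delta$-system Lemma~\ref{L.Delta} to the compact differences between the chosen elements of $S\cap Z[a]_\alpha$ and $p_\alpha\Phi_*(a)p_\alpha$ (your $k_\alpha$ is, up to sign, the paper's $p_\alpha(b-b_\alpha)p_\alpha$), then build an approximate branch by projecting down from the next index in the good stationary set and conclude via Lemma~\ref{LA1}. Your version is marginally tidier in that it populates every level via $\nu(\gamma)$ and gets an $\e$-branch rather than a $2\e$-branch on a cofinal set, but the substance is identical.
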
 

\begin{proof} Let $b=\Phi_*(a)$. For every $\alpha<\omega_1$ 
fix $w_\alpha\in  X_{\alpha+1}$ such that 
\[
b_\alpha=p_\alpha w_\alpha a w_\alpha^* p_\alpha
\]
belongs to $S\cap Z[a]_\alpha$. 
Let $u_\alpha=p_\alpha w_\alpha$. 

Fix $\e>0$. Recall that the fixed basis $e_\alpha$, for $\alpha<\omega_1$, of $H$
spans all $p_\alpha$ (see \eqref{L.P3}). 
Apply `$\Delta$-system' Lemma~\ref{L.Delta} to operators $p_\alpha (b-b_\alpha) p_\alpha$
to find uncountable $J\subseteq \omega_1$
and  finitely supported $c $ and $ c_\alpha$, $\alpha\in J$, with disjoint supports, so that  
\[
\|(b-b_\alpha) -(c+c_\alpha)\|<\e\text{ and } 
\|p_\alpha (b-b_\alpha) p_\alpha -c\|<\e. 
\]
By going to a further subset of $J$ we may assume that for $\alpha<\beta$ in $J$
the support of $c_\alpha$ is included in $\beta\cdot\omega$ (or more naturally stated, that 
$p_\beta c_\alpha p_\beta =c_\alpha$). 
For each $\alpha\in J$ let $\alpha^+$ be the minimal element of $J$ above $\alpha$
and let $b_\alpha'= p_\alpha (b_{\alpha^+}) p_\alpha$. 
For $\alpha$ in $J$ we have $\|b_\alpha'- (p_\alpha b p_\alpha-c)\| <\e$, 
and therefore 
$\|b_\alpha'-p_\alpha b_\beta' p_\alpha\|<2\e$ for $\alpha<\beta$ in $J$. Hence
$b_\alpha'$, for $\alpha\in J$, defines a $2\e$-branch in $T[a]$. 
Since $S$ has a $2\e$-branch for an arbitrarily small $\e$ it has a branch by 
Lemma~\ref{LA1}. 
\end{proof}

\section{Proof of Theorem~\ref{T0}, part II: A generic operator}
\label{S.Proof.II}
In this section we apply Martin's Axiom. 
First, we add a generic operator $\tau$ to $\cB(H)$ by a poset with 
finite conditions which forces that 
$T[\tau]$ has a branch. Second, we use the properties of $\tau$ to argue that 
$T$ has a branch. 

\subsection{Adding $\aleph_1$ Cohen reals} 
\label{S.Cohen}
For a Hilbert space $K$ with a fixed basis $e_j$, $j\in J$, 
let $\bbP(K)$ be the forcing defined as follows. 
A condition in $\bbP(K)$ is a pair $(F,M)$ where $F$ is a finite subset of $J$
and $M$ is an $F\times F$ matrix with entries in the complex rationals, 
$\bbQ+i\bbQ$, such that the operator norm of $M$ satisfies $\|M\|<1$. 
We order $\bbP(K)$ by extension, setting  $(F',M')\leq (F,M)$ if $F'\supseteq F$ and $M'\rs F\times F\equiv M$. 

\begin{lemma} \label{LP1} Poset $\bbP(K)$ is ccc if and only if $K$ is separable. 
\end{lemma}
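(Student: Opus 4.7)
The plan is to prove the two directions separately. For the $(\Leftarrow)$ direction, the observation is that separability of $K$ makes $J$ countable, hence the set of pairs $(F,M)$ with $F\subseteq J$ finite and $M$ a matrix with entries in $\bbQ+i\bbQ$ is countable, so $\bbP(K)$ itself is countable and thus trivially ccc.

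For the $(\Rightarrow)$ direction, assume $K$ is nonseparable, and fix $j_0\in J$ together with an uncountable $A\subseteq J\setminus\{j_0\}$. The plan is to exhibit an uncountable antichain $\{p_\alpha:\alpha\in A\}$ in which every condition carries the same fixed $2\times 2$ matrix over the pair $\{j_0,\alpha\}$. Specifically, set
\[
M_0=\begin{pmatrix}0 & 3/4\\ 0 & 0\end{pmatrix},
\]
which has entries in $\bbQ+i\bbQ$ and norm $3/4<1$, and put $p_\alpha=(\{j_0,\alpha\},M_0)$, interpreting the rows and columns of $M_0$ in the order $j_0$ then $\alpha$.

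To verify this is an antichain, I would assume toward a contradiction that a common extension $(F,M)\leq p_\alpha,p_\beta$ exists for some $\alpha\neq\beta$ in $A$. Then $F\supseteq\{j_0,\alpha,\beta\}$ and the two constraints fix every entry of the $3\times 3$ compression $M'=M|_{\{j_0,\alpha,\beta\}\times\{j_0,\alpha,\beta\}}$ except the positions $(\alpha,\beta)$ and $(\beta,\alpha)$, so
\[
M'=\begin{pmatrix}0 & 3/4 & 3/4\\ 0 & 0 & x\\ 0 & y & 0\end{pmatrix}
\]
for some $x,y\in\bbQ+i\bbQ$. The key calculation is that the unit vector $v=\tfrac{1}{\sqrt 2}(0,1,1)^T$ satisfies $M'v=\tfrac{1}{\sqrt 2}(3/2,x,y)^T$, so $\|M'v\|^2=\tfrac12(\tfrac94+|x|^2+|y|^2)\geq \tfrac98>1$, regardless of the choice of $x$ and $y$. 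Since compressing to a coordinate subspace cannot increase operator norm, $\|M\|\geq\|M'\|>1$, contradicting $(F,M)\in\bbP(K)$.

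The only nonroutine step is selecting the pair $(M_0,v)$: one needs $\|M_0\|<1$ and yet the freedom offered by the two unfixed entries of $M'$ should be insufficient to force $\|M'\|$ below $1$. Placing two identical and sufficiently large entries (here $3/4$) in a common row of $M'$ and testing against the vector that sums them is the simplest witness. The failure of ccc it produces is the obstacle anticipated in the introduction as the reason why the naive ``quantization'' of finite-support Cohen forcing is not ccc when the underlying Hilbert space is nonseparable.
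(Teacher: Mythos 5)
Your proof is correct and follows the same strategy as the paper's: fix a hub index $j_0$ (the paper uses $0$), for each $\alpha$ in an uncountable set define a condition whose matrix is supported on $\{j_0,\alpha\}$ with a single large entry at position $(j_0,\alpha)$, and argue that any two such conditions have no common extension because the norm of any completion of the forced $3\times 3$ block is at least $1$. One difference worth noting: the paper places $1/\sqrt 2$ in that entry, but the conditions of $\bbP(K)$ are required to have entries in $\bbQ+i\bbQ$, and $1/\sqrt 2$ is not a complex rational; your choice of $3/4$ silently repairs this, and any rational of modulus strictly between $1/\sqrt2$ and $1$ would do. Your verification by pairing $M'$ against the test vector $\tfrac{1}{\sqrt2}(0,1,1)^T$ works, though it is slightly more elaborate than necessary: since the first row of $M'$ already has Euclidean norm $\sqrt{(3/4)^2+(3/4)^2}=3\sqrt 2/4>1$, and the operator norm dominates every row norm, $\|M'\|>1$ follows immediately, with no reference to the free entries $x,y$. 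Either way the conclusion $\|M\|\geq\|M'\|>1$ contradicts $\|M\|<1$, so the argument is sound.
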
 

\begin{proof} if $K$ is separable then $\bbP(K)$ is countable, so we only need to 
show the other direction. This direction will not be used in our proof, but we nevertheless
include it since
it shows why Lemma~\ref{L.bbP}  below does not use 
$\bbP(H)$. 

We may assume $0\in J$. 
For each $j\in J\setminus \{0\}$ define a condition $\ba_j=(F^j,M^j)$
by $F^j=\{0,j\}$ and the $(0,j)$ entry of  $M^j$ is equal to $1/\sqrt 2$, while the other
three entries are $0$.  Then the norm of any matrix including $M_j$ and $M_k$ is 
at least $1$, hence $a_j$, for $j\in J$,  is an uncountable antichain. 
\end{proof} 

\subsection{Adding a  generic operator $\tau$} 
\label{S.calD}
By~\eqref{L.P0} in \S\ref{S.Setup} 
the projection 
\[
s_\alpha=p_{\alpha+1}-p_\alpha
\]
has an infinite-dimensional and separable range.
Let 
\[
\textstyle\calD=\{a\in \cBH : a=\sum_{\alpha<\omega_1} s_\alpha a s_\alpha\}
\]
where the sum is taken in the strong operator topology. 
This subalgebra of $\cB(H)$ is an analogue of algebras $\calD[\vec E]$ that played a prominent part in the proof of Theorem~\ref{T2} in \cite{Fa:All}. Although much of the theory of $\calD[\vec E]$ has analogues in the nonseparable case, we shall not develop this theory since 
the role of $\calD$ in the proof of Theorem~\ref{T0} is different.

For each $\alpha<\omega_1$ let $H_\alpha=s_\alpha H$, with the basis $\{e_\xi: \alpha\cdot\omega\leq \xi<(\alpha+1)\cdot \omega\}$ and let 
$\bbP_\alpha$ be $\bbP(H_\alpha)$. The finitely supported product $\bbP$ of $\bbP_\alpha$, 
for $\alpha<\omega_1$ is ccc. Actually, being a finitely supported product of countable
posets, it is forcing-equivalent to the poset for adding $\aleph_1$ Cohen reals. 

If $\dot G\subseteq \bbP$ is a generic filter, then it defines a sesquilinear form
whose norm is, by genericity, equal to $1$. This in turn defines an operator 
on $H$ in the unit ball 
of $\cBH$  (\cite[Lemma 3.2.2]{Pede:Analysis})
This operator belongs to 
the von Neumann algebra $\calD$ (see \S\ref{S.calD}) and we let $\tau$ denote its
$\bbP$-name.

\begin{lemma} \label{L.bbP} Poset $\bbP$ forces that 
every subtree of $T[\tau]$ has a branch. 
\end{lemma}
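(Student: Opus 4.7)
The strategy is to carry out, within $V[G]$, the proof of Lemma~\ref{L.branch} applied to $\tau$. The only genuine obstacle is that $\tau$ lies outside $V$, so the target operator $\Phi_*(\tau)$ used in that proof is not available---$\Phi$ does not canonically extend to $\cCH^{V[G]}$---and I would have to construct an analogue of $\Phi_*(\tau)$ directly in $V[G]$. By Lemma~\ref{LA1}, it is enough to show that every subtree of $T[\tau]$ contains an $\e$-branch for every $\e > 0$. So fix a subtree $S$ and $\e > 0$, and for each $\alpha < \omega_1$ pick $b_\alpha = p_\alpha w_\alpha \tau w_\alpha^* p_\alpha \in S \cap Z[\tau]_\alpha$ with $w_\alpha \in X_{\alpha+1}$ (so $w_\alpha \eqK v_{\alpha+1}$).

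First I would verify the mod-compact coherence $p_\alpha b_\beta p_\alpha \eqK b_\alpha$ for $\alpha < \beta$. This reduces to $p_\alpha v_{\alpha+1} \tau v_{\alpha+1}^* p_\alpha \eqK p_\alpha v_{\beta+1} \tau v_{\beta+1}^* p_\alpha$, which in turn follows from the identity $p_\alpha v_{\beta+1} \eqK p_\alpha v_{\alpha+1}$ (a consequence of $p_\alpha \leq r_{\alpha+1}$, the Calkin-algebra relation $\dot r_{\alpha+1} \dot v_{\beta+1} = \dot v_{\beta+1} \dot p_{\alpha+1}$ that comes from $\Phi(\dot p_{\alpha+1}) = \dot r_{\alpha+1}$, and $v_{\alpha+1} \eqK v_{\beta+1} p_{\alpha+1}$), together with the commutation $\tau p_{\alpha+1} = p_{\alpha+1} \tau$ that holds because $\tau \in \calD$.

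The key new step is the construction of a target $b' \in \cBH^{V[G]}$ with $p_\alpha b' p_\alpha \eqK b_\alpha$ for every $\alpha$. I would set
\[
b' := \sum_{\beta < \omega_1} \hat v_\beta\,\tau\,\hat v_\beta^*, \qquad \hat v_\beta := v_{\beta+1} s_\beta,
\]
as an SOT sum. The $\hat v_\beta$'s have mutually orthogonal initial projections $s_\beta$, and their range projections are pairwise orthogonal modulo compact (equal in $\cCH$ to the mutually disjoint $\dot r_{\beta+1} - \dot r_\beta$); a routine compact adjustment of the $\hat v_\beta$'s yields strict orthogonality of the ranges, so the partial sums are uniformly bounded by $\|\tau\| \leq 1$ and SOT-Cauchy. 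To verify $p_\alpha b' p_\alpha \eqK b_\alpha$ I would compare, summand-by-summand, with the expansion $b_\alpha \eqK \sum_{\beta \leq \alpha} p_\alpha v_{\alpha+1} s_\beta \tau s_\beta v_{\alpha+1}^* p_\alpha$ (using $v_{\alpha+1} = v_{\alpha+1} p_{\alpha+1}$ and $p_{\alpha+1} \tau = \tau p_{\alpha+1}$): for $\beta > \alpha$ the $\beta$-summand of $p_\alpha b' p_\alpha$ vanishes modulo compact (since $p_\alpha v_{\beta+1} \eqK p_\alpha v_{\alpha+1}$ and $v_{\alpha+1} s_\beta = 0$, because $s_\beta \perp p_{\alpha+1}$); for $\beta \leq \alpha - 2$ the identity $v_{\beta+1} s_\beta \eqK v_{\alpha+1} s_\beta$ (from $v_{\beta+1} \eqK v_{\alpha+1} p_{\beta+1}$ and $s_\beta \leq p_{\beta+1}$) matches the $\beta$-summands; the boundary case $\beta = \alpha - 1$ is handled by $v_\alpha s_{\alpha-1} \eqK v_{\alpha+1} s_{\alpha-1}$; and the $\beta = \alpha$ summand coincides exactly.

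With $b'$ in place, the rest mirrors Lemma~\ref{L.branch}. I would apply Lemma~\ref{L.Delta} inside $V[G]$ (legitimate because $\bbP$ is ccc and therefore preserves $\omega_1$, stationary sets, and the Pressing Down Lemma) to the compact sequence $\bigl(p_\alpha(b' - b_\alpha) p_\alpha\bigr)_{\alpha < \omega_1}$; the resulting $\Delta$-system produces an uncountable $J \subseteq \omega_1$ together with finitely-supported $c$ and $c_\alpha$, $\alpha \in J$, having disjoint supports, and then $b_\alpha' := p_\alpha b_{\alpha^+} p_\alpha$ (for $\alpha \in J$, with $\alpha^+$ the successor of $\alpha$ in $J$) defines a $2\e$-branch of $S$ by the very same calculation. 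As $\e > 0$ was arbitrary and the argument applies uniformly to every subtree of $S$, Lemma~\ref{LA1} yields a branch of $S$. The main obstacle is the target construction: verifying the SOT-convergence of $b'$ via the compact adjustment securing strict orthogonality of the ranges, and carrying out the summand-wise comparison establishing $p_\alpha b' p_\alpha \eqK b_\alpha$, with particular care in the boundary index $\beta = \alpha - 1$ where $r_\alpha$ need not lie inside $p_\alpha$.
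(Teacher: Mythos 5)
Your proposal takes a genuinely different route from the paper, and the route breaks down at exactly the step you flag as ``the key new step.'' The construction of the target $b'=\sum_{\beta}v_{\beta+1}s_\beta\tau s_\beta v_{\beta+1}^*$ has two unrepaired gaps. First, SOT convergence: the ranges of the $\hat v_\beta$ are orthogonal only modulo compact, with no uniform control on the compact errors, so the partial sums need not be uniformly bounded; and the proposed ``routine compact adjustment'' is a transfinite recursion of length $\omega_1$ in which, at a limit stage $\beta$, one needs $q_\beta\hat v_\beta$ compact where $q_\beta$ is the supremum of the previously adjusted ranges --- but a supremum of infinitely many projections each almost dominated by $r_\beta$ need not be almost dominated by $r_\beta$, since the compact errors accumulate. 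Second, and more fundamentally, even granting that $b'$ exists, the verification $p_\alpha b'p_\alpha\eqK b_\alpha$ does not follow from your summand-by-summand comparison: the discrepancy $p_\alpha b'p_\alpha-b_\alpha$ is an \emph{infinite} sum of compact operators (the tail terms $p_\alpha v_{\beta+1}\tau_\beta v_{\beta+1}^*p_\alpha$ for $\beta>\alpha$, and the mismatch terms $p_\alpha(v_{\beta+1}s_\beta-v_{\alpha+1}s_\beta)\tau_\beta(\cdots)$ for $\beta<\alpha$), and an infinite sum of compacts need not be compact. This is precisely the classical obstruction to trivializing a coherent family: in the discrete analogue, the diagonal function $h(\xi):=h_{\beta+1}(\xi)$ for $\xi\in[\beta\omega,(\beta+1)\omega)$ is globally defined, yet $h\rs\alpha$ differs from $h_\alpha$ on a union of $|\alpha|$ finite sets, which can be infinite. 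Your recipe, if it worked, would essentially show that the coherent family $(v_\alpha)$ is locally trivializable by an explicit formula, which is tantamount to the conclusion one is trying to reach and is exactly what cannot be assumed.

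The paper avoids constructing any target for $\tau$ in $V[G]$. Its proof is by contradiction: if some condition forces a subtree $T'[\tau]$ to have no $\e$-branch, then (by the proof of Lemma~\ref{LA2}) the poset $\dot\bbS$ that $\e/2$-specializes $T'[\tau]$ is ccc, so $\bbP*\dot\bbS$ is ccc; applying MA in the ground model to this iteration and suitably many dense sets decodes the filter into an actual operator $a\in\cBH$ of the ground model together with an $\e/2$-special subtree of $T[a]$, which by Lemma~\ref{LA1.5} has no branches --- contradicting Lemma~\ref{L.branch}, where $\Phi_*(a)$ \emph{is} available. In other words, the paper reflects the problem back to $V$ via MA rather than redoing the argument of Lemma~\ref{L.branch} in $V[G]$; this is the idea your proposal is missing, and it is what makes the generic operator usable without a lifting of $\Phi$ to the extension.
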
 

\begin{proof} If not, then by Lemma~\ref{L.branch}
we fix a condition $p\in \bbP$ deciding $\e>0$ such that 
some subtree $T'[\tau]$ of $T[\tau]$ has no $\e$-branch and 
consider $\bbP* \dot \bbS$ (below $p$) where $\dot\bbS$ is a ccc poset
for $\e/2$-specializing $T'[\tau]$. By applying MA we can find $a\in  \cBH$
and an $\e/2$-special subtree of $T[a]$. 
By Lemma~\ref{LA1.5} this subtree has no branches, and 
this contradicts Lemma~\ref{L.branch}. 
\end{proof}

Fix $\e>0$. 
By Lemma~\ref{L.bbP}, if $S$ is a subtree of $T$ then for $\alpha<\omega_1$ we can fix 
$w_\alpha$ and a condition $\ba_\alpha$  in $\bbP$ that forces $\Ad (p_\alpha w_\alpha) \tau$
belongs to a cofinal $\e$-branch of $T[\tau]$.  Here $w_\alpha\in S\cap X_{\alpha+1}$  
and  $w_\alpha$ is in the ground model. 
Identify~$\ba_\alpha$ with a finitely supported operator in $\cBH$
and note that it belongs to the algebra  $\calD$ as defined in \S\ref{S.calD}. 
Apply Lemma~\ref{L.Delta} to $\{\Ad (p_\alpha w_\alpha) \ba_\alpha\}$ 
to find a finitely supported $\bb$ such that 
\begin{enumerate}
\popcounter
\item \label{Est.b} 
$\|\bb - \Ad (p_\alpha w_\alpha) \ba_\alpha\|<\e$
\pushcounter
\end{enumerate}
for all $\alpha$ in  a stationary set $J_0$. 
Since the coefficients of $\ba_\alpha$ are complex rationals, 
by the $\Delta$-system lemma and a counting argument 
there are a stationary set $J_1\subseteq J_0$,  
 a finitely-supported projection $q$, and $\ba$ such that 
\begin{enumerate}
\popcounter
\item \label{Est.a} 
$q\ba q=\ba$  and 
$p_\alpha \ba_\alpha p_\alpha =  \ba$
\pushcounter
\end{enumerate} 
 for all $\alpha\in J_1$. 
Note that 
$
\ba_\alpha= \ba + (I-p_\alpha) \ba_\alpha (I-p_\alpha) 
$
for all $\alpha\in J_1$. 
Find $\bar\alpha$ such that $p_{\bar\alpha} q=q$. 
Applying Lemma~\ref{L.Delta} to $(w_\beta-v_{\bar\alpha})p_{\bar\alpha}$
 find a stationary $J\subseteq J_1$ 
 such that 
 \begin{enumerate}
 \popcounter
 \item \label{Est.r} $\|(w_\beta-w_\gamma)p_{\bar\alpha}\|<\e$
 \pushcounter
 \end{enumerate}
 for all $\beta<\gamma$ in $J$. 
  Let $q_\alpha$ denote the support of $\ba_\alpha$. 
 For $\beta\in J$ let $u_\beta= w_\beta p_\beta$. 
Then 
for $\alpha+1\leq \beta$
we have $p_\alpha u_\beta\eqK p_\alpha w_\beta$.

 \begin{lemma}\label{L.EBranch}
 The set 
 $\{r_{\alpha+2} u_\beta p_{\alpha+1}: \alpha+\omega<\beta, \beta\in J\}$
 is a $5\e$-branch of $T$. 
 \end{lemma}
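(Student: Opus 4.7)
The plan has two parts: first, verify that each purported element $r_{\alpha+2} u_\beta p_{\alpha+1}$ lies at level $\alpha+1$ of $T$; second, establish that any two such elements are within $\rho$-distance $5\e$.

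For the membership part, I would fix $\alpha$ and $\beta \in J$ with $\alpha + \omega < \beta$. Since $w_\beta \in S \cap X_{\beta+1}$, it has the form $r_{\beta+2} w p_{\beta+1}$ with $w \eqK v_{\beta+1}$. Using $p_{\alpha+1} \leq p_\beta \leq p_{\beta+1}$ and $r_{\alpha+2} \leq r_{\beta+2}$ (both from the chain property \eqref{L.P1}), one computes $r_{\alpha+2} u_\beta p_{\alpha+1} = r_{\alpha+2} w p_{\alpha+1}$. Combining the coherence $v_{\beta+1} p_{\alpha+1} \eqK v_{\alpha+1}$ from \S\ref{S.Setup} with the identity $r_{\alpha+2} v_{\alpha+1} = v_{\alpha+1}$ (which follows from $v_{\alpha+1} = r_{\alpha+1} v_{\alpha+1}$ together with $r_{\alpha+1} \leq r_{\alpha+2}$), we get $r_{\alpha+2} u_\beta p_{\alpha+1} \eqK v_{\alpha+1}$. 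Thus the operator itself witnesses membership in $X_{\alpha+1}$.

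For the distance estimate, consider two elements with $\alpha < \alpha'$ (WLOG $\alpha + 1 \geq \bar\alpha$), for $\beta, \beta' \in J$ with $\alpha + \omega < \beta$ and $\alpha' + \omega < \beta'$. Evaluating $\rho$ at the lower level $\alpha + 1$ via the projection $x \mapsto r_{\alpha+2} x p_{\alpha+1}$ and using $r_{\alpha+2} \leq r_{\alpha'+2}$ together with $p_{\alpha+1} \leq p_{\alpha'+1}$, the $\rho$-distance simplifies to $\|r_{\alpha+2}(w_\beta - w_{\beta'}) p_{\alpha+1}\|$. The target bound $5\e$ is to be obtained by combining the three $\Delta$-system estimates: two applications of \eqref{Est.b} give $\|p_\beta w_\beta \ba_\beta w_\beta^* p_\beta - p_{\beta'} w_{\beta'} \ba_{\beta'} w_{\beta'}^* p_{\beta'}\| < 2\e$, preserved under compression by $p_{\alpha+1}$; the decomposition $\ba_\beta = \ba + (I - p_\beta)\ba_\beta(I - p_\beta)$ from \eqref{Est.a} together with the support $\ba = p_{\bar\alpha} \ba p_{\bar\alpha}$ isolates a fixed ``core'' sandwich with a controllable tail correction (using $\|\ba_\beta\| \leq 1$ and the coherence $w_\beta \eqK v_{\beta+1}$); and \eqref{Est.r} supplies $\|(w_\beta - w_{\beta'}) p_{\bar\alpha}\| < \e$, which is exactly what is needed to convert the sandwich-level inequality into a norm bound on $r_{\alpha+2}(w_\beta - w_{\beta'}) p_{\alpha+1}$ itself. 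Tallying these contributions yields $5\e$ by the triangle inequality.

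The main obstacle will be the distance estimate, specifically the passage from the $\Ad$-sandwich bound $\|\bb - \Ad(p_\beta w_\beta) \ba_\beta\| < \e$ to a direct norm bound on $r_{\alpha+2}(w_\beta - w_{\beta'}) p_{\alpha+1}$. Since $\ba$ is not invertible, one cannot simply cancel it from the expressions, and the correction $(I - p_\beta) \ba_\beta (I - p_\beta)$ has norm up to $\|\ba_\beta\| \leq 1$, so naive bounds are insufficient. The argument must exploit the support containment of $\ba$ within $p_{\bar\alpha}$ together with the near-partial-isometry property $w_\beta \eqK v_{\beta+1}$ and the coherence of the $v_\alpha$'s, in order to transfer the information stored in the sandwich form $w_\beta \ba w_\beta^*$ into the desired norm estimate on $w_\beta$'s action on the relevant projections.
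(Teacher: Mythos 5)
Your proposal correctly identifies the crux --- converting the sandwich estimates on $\operatorname{Ad}(p_\alpha w_\beta)$ into a direct norm bound on $r_{\alpha+2}(w_\beta-w_{\beta'})p_{\alpha+1}$ --- but it does not actually supply the two ideas that carry the paper's proof, and the route you sketch cannot be made to work as stated. The fixed condition $\ba$ is finitely supported inside $p_{\bar\alpha}$, where $\bar\alpha$ is a single fixed countable ordinal, while the estimate must hold for \emph{every} $\alpha$ with $\alpha+\omega<\beta$. Sandwiching with $\ba$ (or with $\ba_\beta$, whose ``tail'' is supported above $p_\beta$) gives no information about how $w_\beta$ and $w_{\beta'}$ act on vectors in $(p_{\alpha+1}-p_{\bar\alpha})H$, and \eqref{Est.r} likewise controls only $(w_\beta-w_{\beta'})p_{\bar\alpha}$. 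So your claim that \eqref{Est.r} ``is exactly what is needed'' to pass to a bound on $r_{\alpha+2}(w_\beta-w_{\beta'})p_{\alpha+1}$ is false for large $\alpha$: a single finite-rank operator cannot detect the difference of two operators on the infinite-dimensional complement of its support, and ``tallying by the triangle inequality'' does not close this gap.

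The paper resolves this with two ingredients you omit. First, Claims~\ref{Cl.1}--\ref{Cl.3} use \emph{genericity}, not just the numerical $\Delta$-system facts: since $\ba_\beta$ forces $\operatorname{Ad}(p_\alpha w_\beta)\tau$ into an $\e$-branch of $T[\tau]$, a density argument (perturbing the coefficients of an arbitrary ground-model $a\in\calD$ to a condition extending both $\ba_\beta$ and $\ba_\gamma$) upgrades the estimate to $\|\operatorname{Ad}(p_\alpha u_\beta)a-\operatorname{Ad}(p_\alpha u_\gamma)a\|\leq 5\e$ \emph{uniformly for all} $a\in\calD$ with $\|a\|\leq 1$ supported below $p_\beta$. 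Second, Lemma~\ref{L.d} proves $d_4\leq d_2$ by constructing, for each pair $u,w$ and each $\delta>0$, a tailored rank-two operator in $\calD$ (transposing a vector in the direction of $u^*\xi-w^*\xi$ with an auxiliary vector on which $u$ and $w$ nearly agree) that witnesses $\|p_{\alpha}(u-w)\|$ up to $\delta$ via the sandwich metric. It is the combination of the uniform-over-$\calD$ estimate with this witness construction --- not any manipulation of the fixed $\ba$ --- that yields the $5\e$ bound; your proposal is missing both.
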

 
 Preparations for the proof of Lemma~\ref{L.EBranch} take up the remainder of this section, 
 with the main points being Claim~\ref{Cl.3} and Lemma~\ref{L.d}.

   \begin{claim}\label{Cl.1} 
 If $a\in \calD$, $\alpha<\beta$ are in $J$, $q_\alpha a q_\alpha=\ba_\alpha$, 
 and $q_\beta a q_\beta=\ba_\beta$, then 
 \[
 \|\Ad (p_\alpha w_\alpha) a -\Ad (p_\alpha w_\beta) a\|\leq \e. 
 \]
 \end{claim}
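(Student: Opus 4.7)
The plan is to decompose the difference $\Ad(p_\alpha w_\alpha)a-\Ad(p_\alpha w_\beta)a$ using the block-diagonal structure of $a$ (as $a\in\calD$), the $\Delta$-system structure of $\{\ba_\gamma\}_{\gamma\in J_1}$, and the norm coherence of the $w_\gamma$'s on $p_{\bar\alpha}$. The key ingredients are: since $a\in\calD$, it commutes with every $p_\gamma$; since $w_\alpha\in X_{\alpha+1}$ and $w_\beta\in X_{\beta+1}$, we have $w_\alpha=w_\alpha p_{\alpha+1}$ and $w_\beta=w_\beta p_{\beta+1}$; and by the setup $p_\gamma\ba_\gamma p_\gamma=\ba$ with $\ba=p_{\bar\alpha}\ba p_{\bar\alpha}$.

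First I would rewrite $\Ad(p_\alpha w_\alpha)a=p_\alpha w_\alpha(p_{\alpha+1}ap_{\alpha+1})w_\alpha^* p_\alpha$ and $\Ad(p_\alpha w_\beta)a=p_\alpha w_\beta(p_{\beta+1}ap_{\beta+1})w_\beta^* p_\alpha$. Adding and subtracting $p_\alpha w_\beta(p_{\alpha+1}ap_{\alpha+1})w_\beta^* p_\alpha$, the difference splits into (i) a main piece comparing the action of $w_\alpha$ and $w_\beta$ on the common operator $p_{\alpha+1}ap_{\alpha+1}$, and (ii) a tail piece $p_\alpha w_\beta(p_{\beta+1}-p_{\alpha+1})a(p_{\beta+1}-p_{\alpha+1})w_\beta^* p_\alpha$ corresponding to the blocks of $a$ between $p_{\alpha+1}$ and $p_{\beta+1}$.

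For piece (i), I would decompose $p_{\alpha+1}ap_{\alpha+1}=\ba+(p_{\alpha+1}ap_{\alpha+1}-\ba)$ using $q_\alpha a q_\alpha=\ba_\alpha$ and $\ba_\alpha=\ba+(I-p_\alpha)\ba_\alpha(I-p_\alpha)$, where $\ba$ is the common core supported on $q\leq p_{\bar\alpha}$. The $\ba$-part of the difference reduces to
\[
p_\alpha(w_\alpha-w_\beta)p_{\bar\alpha}\,\ba\,p_{\bar\alpha}w_\alpha^* p_\alpha + p_\alpha w_\beta p_{\bar\alpha}\,\ba\,p_{\bar\alpha}(w_\alpha-w_\beta)^* p_\alpha,
\]
which is bounded by a constant multiple of $\e$ using $\|(w_\alpha-w_\beta)p_{\bar\alpha}\|<\e$ from \eqref{Est.r}, together with $\|\ba\|\leq 1$ and the norm bounds on $w_\alpha,w_\beta$. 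For the remaining non-$\ba$ contribution in (i), as well as for the tail (ii), I would use \eqref{Est.b} (giving $\Ad(p_\alpha w_\alpha)\ba_\alpha\approx_\e\bb$) together with the identity $\Ad(p_\alpha w_\beta)(\cdot)=p_\alpha\Ad(p_\beta w_\beta)(\cdot)p_\alpha$ (valid since $p_\alpha\leq p_\beta$) to show that $\Ad(p_\alpha w_\beta)\ba_\beta\approx_\e p_\alpha\bb p_\alpha$; the hypotheses $q_\alpha a q_\alpha=\ba_\alpha$ and $q_\beta a q_\beta=\ba_\beta$ then let one replace the $\ba_\gamma$'s by the appropriate pieces of $a$.

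The main obstacle I anticipate is in correctly bookkeeping the contributions from blocks of $a$ outside the common core $q$ — namely, the tail of $\ba_\alpha$ above $p_\alpha$, the tail of $\ba_\beta$ above $p_\beta$, and any ``free'' blocks of $a$ on $s_\gamma$ with $\gamma\notin\supp(\ba_\alpha)\cup\supp(\ba_\beta)$. Showing that these contribute identically (up to $\e$) to both $\Ad(p_\alpha w_\alpha)a$ and $\Ad(p_\alpha w_\beta)a$, despite $w_\alpha$ and $w_\beta$ being close only on $p_{\bar\alpha}$, is the delicate part and should follow from combining the $\bb$-approximation with the $\Delta$-system disjointness of the tails $q'_\alpha\perp q'_\beta$.
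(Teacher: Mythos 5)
There is a genuine gap, and it sits exactly where you flag ``the delicate part.'' The ingredients you propose to use --- \eqref{Est.b}, \eqref{Est.r} and the $\Delta$-system structure of the $\ba_\gamma$ --- do not contain enough information to prove the claim. Estimate \eqref{Est.r} controls $(w_\alpha-w_\beta)p_{\bar\alpha}$ only on the fixed initial segment $p_{\bar\alpha}$, not on $p_{\alpha+1}$: between $p_{\bar\alpha}$ and $p_{\alpha+1}$ (and between $p_{\alpha+1}$ and $p_{\beta+1}$) the operators $w_\alpha$ and $w_\beta p_{\alpha+1}$ are merely equal modulo a compact, with no norm control. Since $a$ is an arbitrary element of $\calD$ constrained only on the finite-dimensional ranges of $q_\alpha$ and $q_\beta$, its blocks $s_\gamma a s_\gamma$ for $\gamma$ outside those supports are completely free, and nothing in your list bounds $\|\Ad (p_\alpha w_\alpha)(s_\gamma a s_\gamma)-\Ad(p_\alpha w_\beta)(s_\gamma a s_\gamma)\|$ for such $\gamma$. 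Even on the part that is controlled, chaining \eqref{Est.b} through $\bb$ yields only a $2\e$ bound, which already overshoots the claimed $\e$.

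The missing idea is that Claim~\ref{Cl.1} is not a norm computation but a genericity argument, resting on the one hypothesis your proposal never invokes: the conditions $\ba_\alpha$ and $\ba_\beta$ were chosen so that $\ba_\gamma$ forces $\Ad(p_\gamma w_\gamma)\tau$ into the \emph{same} cofinal $\e$-branch of $T[\tau]$, hence any common extension of $\ba_\alpha$ and $\ba_\beta$ forces $\|\Ad(p_\alpha w_\alpha)\tau-\Ad(p_\alpha w_\beta)\tau\|\le\e$. The paper argues contrapositively: if the claim failed for some $a$, then by continuity there is a finitely supported projection $s\geq q_\alpha\vee q_\beta$ such that every $c\in\calD$ with $scs=sas$ fails with a fixed margin $\delta$; perturbing the coefficients of $sas$ to complex rationals produces a condition in $\bbP$ extending both $\ba_\alpha$ and $\ba_\beta$ and forcing $\|\Ad(p_\alpha w_\alpha)\tau-\Ad(p_\alpha w_\beta)\tau\|>\e$, a contradiction. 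Your block decomposition can at best recover weaker estimates on the finitely supported core of $a$; the control over the rest of $a$ comes only from the forcing, so the argument needs to be restructured around that fact.
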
 
 
 \begin{proof} Otherwise, there is $\delta>0$ and 
 a finitely supported projection $s\geq q_\alpha \vee q_\beta$
 such that for every $c\in \calD$ satisfying $scs=sas$
 we have 
$ \|\Ad (p_\alpha w_\alpha) c -\Ad (p_\alpha w_\beta) c\|> \e+\delta$. 
Making a small change to coefficients of $sas$ one obtains a condition 
in $\bbP$ forcing that $\|\Ad (p_\alpha w_\alpha) \tau - \Ad (p_\alpha w_\beta) \tau\| > \e$, 
a contradiction. 
\end{proof} 

\begin{claim} \label{Cl.2} Assume $a$ and $b$ are in $\calD$, 
$qaq=qbq=0$,  $p_\alpha a p_{\alpha+\omega} =p_\alpha b p_{\alpha+\omega}$, 
and $\alpha+\omega<\beta$ for $\beta\in J$. Then 
\[
\|\Ad (p_\alpha w_\beta) (a+\ba_\beta) - \Ad (p_\alpha w_\beta) (b+\ba_\beta)\|\leq 2\e. 
\]
\end{claim}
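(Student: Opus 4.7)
The plan is to prove Claim~\ref{Cl.2} by a forcing argument analogous to the proof of Claim~\ref{Cl.1}. By linearity of conjugation, $\Ad(p_\alpha w_\beta)(a+\ba_\beta) - \Ad(p_\alpha w_\beta)(b+\ba_\beta) = \Ad(p_\alpha w_\beta)(a-b)$, so the $\ba_\beta$ summands formally cancel; nevertheless they are present in the statement because they anchor the argument to the ground-model condition $\ba_\beta\in\bbP$, which is what licenses the use of the $\e$-branch property.

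I would argue by contradiction. Suppose the inequality fails by a definite slack $\delta > 0$. By norm continuity, locate a finitely supported projection $s\geq q_\beta$ such that for every $c,c'\in\calD$ in the unit ball with $scs = s(a+\ba_\beta)s$ and $sc's = s(b+\ba_\beta)s$, the separation under $\Ad(p_\alpha w_\beta)$ still exceeds $2\e+\delta/2$. The hypotheses $qaq = qbq = 0$, the block-diagonality of operators in $\calD$, and the fact that $p_\beta\ba_\beta p_\beta = \ba$ is supported on $q\leq q_\beta$ together ensure that $s(a+\ba_\beta)s$ and $s(b+\ba_\beta)s$ coincide with $\ba_\beta$ on $q_\beta$. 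Small complex-rational perturbations therefore yield conditions $\bc_a,\bc_b\in\bbP$, each extending $\ba_\beta$, that force $s\tau s$ to approximate $s(a+\ba_\beta)s$ and $s(b+\ba_\beta)s$, respectively.

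The contradiction comes from invoking the $\e$-branch property twice. Under either $\bc_a$ or $\bc_b$, since $\ba_\beta$ is extended, $\Ad(p_\beta w_\beta)\tau$ belongs to a cofinal $\e$-branch of $T[\tau]$, and its level-$\alpha$ projection $\varpi_{\beta\alpha}(\Ad(p_\beta w_\beta)\tau) = \Ad(p_\alpha w_\beta)\tau$ is within $\e$ of the branch's anchor at level $\alpha$. The hypothesis $p_\alpha a p_{\alpha+\omega} = p_\alpha b p_{\alpha+\omega}$, which reduces to $ap_\alpha = bp_\alpha$ for operators in $\calD$, makes $p_\alpha\tau p_\alpha$ identical in the two generic extensions; consequently the anchors of the two branches can be related through a common ground-model operator (of the sort $\bb$ produced in Lemma~\ref{L.Delta}), and a triangle-inequality assembly yields the bound $2\e$, contradicting the $2\e+\delta/2$ slack.

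The main obstacle is that the natural anchor $\Ad(p_\alpha w_\alpha)\tau$ depends on $p_{\alpha+1}\tau p_{\alpha+1}$ via the relation $w_\alpha = w_\alpha p_{\alpha+1}$, and the hypothesis $ap_\alpha = bp_\alpha$ does not directly constrain the $s_\alpha$-block. I expect the gap condition $\alpha+\omega<\beta$ to be decisive here: by comparing at an intermediate level strictly between $\alpha+1$ and $\beta$ and exploiting the coherence $v_\gamma\eqK v_\delta p_\gamma$ for the partial isometries fixed in Section~\ref{S.Setup}, the $s_\alpha$-block discrepancy between the two generics should be absorbed into the $\e$-tolerance afforded by the branch estimate.
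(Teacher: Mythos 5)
Your proposal correctly reduces the claim to bounding $\|\Ad(p_\alpha w_\beta)(a-b)\|$, and you correctly identify the central obstacle at the end: the level-$\alpha$ anchor $\Ad(p_\alpha w_\alpha)\tau$ depends on the whole block $p_{\alpha+1}\tau p_{\alpha+1}$, which the hypothesis $p_\alpha(a-b)=0$ does not control. But naming the obstacle and then writing that the gap $\alpha+\omega<\beta$ ``should be decisive'' and that the discrepancy ``should be absorbed'' is where the proof stops being a proof. That last paragraph is precisely the content of the claim; without it you have only a plan, and the plan as stated (two generics $\bc_a,\bc_b$, branch estimate at level~$\alpha$, relate the two anchors via a common ground-model $\bb$ as in Lemma~\ref{L.Delta}) does not close, because nothing forces the two branch anchors at level~$\alpha$ to be close.

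What the paper actually does is quite specific and you only gesture at it. It does not re-run the forcing argument: it invokes Claim~\ref{Cl.1}, but crucially at an intermediate level $\alpha+n$, not at level~$\alpha$. It sets $s_n=p_{\alpha+\omega}-p_{\alpha+n}$ and uses a continuity argument (that $\Ad(p_\alpha w_\beta)$ is insensitive to the $s_n$-block for large $n$) to replace $a+\ba_\beta$ by $(1-s_n)(a+\ba_\beta)+c$ with the controlled error $<\delta/2$; the ground-model operator $c$ is chosen to be $\ba_{\alpha+n}-\ba$, which is exactly what makes Claim~\ref{Cl.1} applicable with indices $\alpha+n$ and $\beta$. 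After swapping $w_\beta$ for $w_{\alpha+n}$ (cost $\leq\e$ each time by Claim~\ref{Cl.1}), the two truncated operators have been arranged to produce the same conjugate, and the five-term triangle inequality yields $2\e+\delta<2\e+\delta$. None of these three moves --- the choice of $n$ by continuity, the specific $c=\ba_{\alpha+n}-\ba$, and the transfer of the comparison from level~$\alpha$ to level~$\alpha+n$ --- appears in your proposal, and they are exactly the content that the ``gap condition'' $\alpha+\omega<\beta$ licenses (it guarantees $\alpha+n<\beta$ for every $n$). Also, Lemma~\ref{L.Delta} plays no role in the paper's proof of this claim; the coherence $v_\gamma\eqK v_\delta p_\gamma$ is likewise not what carries the argument. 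So this is a genuine gap, not a stylistic difference.
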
 

\begin{proof} Assume otherwise and let 
\[
\delta=\|\Ad (p_\alpha w_\beta) (a+\ba_\beta) - \Ad (p_\alpha w_\beta) (b+\ba_\beta)\|-2\e. 
\]
For $n<\omega$ write $s_n=p_{\alpha+\omega}-p_{\alpha+n}$. 
By continuity fix $n<\omega$ such that  
for all $c\in s_n\calD$ ($=s_n \calD s_n$
since $s_n$ in the commutant of $\calD$)
with $\|c\|\leq 1$  we have 
\[
\|\Ad (p_\alpha w_\beta)(a+\ba_\beta)
-\Ad (p_\alpha w_\beta)((1-s_n) (a+\ba_\beta) + c)
\|<\delta/2
\]
and 
\[
\|\Ad (p_\alpha w_\beta)(b+\ba_\beta)
-\Ad (p_\alpha w_\beta)((1-s_n) (b+\ba_\beta) + c)
\|<\delta/2. 
\]
Let $c=\ba_{\alpha+n} -\ba$. 
Then Claim~\ref{Cl.1} applied to $(1-s_n) (a+\ba_\beta) + c$ 
and to $(1-s_n) (b+\ba_\beta) + c$ implies
\begin{align*} 
\|\Ad (p_\alpha w_\beta)((1-s_n) (a+\ba_\beta) + c)
-\Ad (p_\alpha w_{\alpha+n}) ((1-s_n) (a+\ba_\beta) + c)
\|
&\leq \e\\
\|\Ad (p_\alpha w_\beta)((1-s_n) (b+\ba_\beta) + c)
-\Ad (p_\alpha w_{\alpha+n}) ((1-s_n) (b+\ba_\beta) + c)
\|
&\leq \e
\end{align*} 
leading to $2\e+\delta < 2\e+\delta$. 
\end{proof}

\begin{claim} \label{Cl.3} For $\alpha+\omega<\beta<\gamma$ such that
$\beta$ and $\gamma$ are in $J$ we have 
\[
\Delta=\|\Ad (p_\alpha u_\beta) a-\Ad (p_\alpha u_\gamma) a\|\leq 5\e
\]
 for all $a\in \calD$ with $\|a\|\leq1$ and $(1-p_\beta)a=0$. 
\end{claim}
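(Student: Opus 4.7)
The plan is to bound $\Delta \le 5\e$ by inserting an auxiliary operator $\tilde a\in\calD$ that simultaneously satisfies the hypotheses of Claim~\ref{Cl.1} at the pair $(\beta,\gamma)$, and then bridging $\Ad(p_\alpha u_\beta)a\to\Ad(p_\alpha u_\beta)\tilde a$ (and analogously for $\gamma$) using Claim~\ref{Cl.2}. The target numerology is $2\e+\e+2\e=5\e$: one application of Claim~\ref{Cl.1} contributing $\e$, and two applications of Claim~\ref{Cl.2} contributing $2\e$ each.

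Concretely, I would set $\tilde a=a^+ + \ba + b_\beta + b_\gamma$, where $a^+=a-qaq$ and $b_\xi=(1-p_\xi)\ba_\xi(1-p_\xi)=\ba_\xi-\ba$ is the upper part of $\ba_\xi$. After a further refinement of $J$ arranging that the upper supports $q_\beta^+=q_\beta\setminus q$ and $q_\gamma^+=q_\gamma\setminus q$ are disjoint for $\beta<\gamma$ in $J$ (feasible by a counting argument since each $\ba_\xi$ is finitely supported in $\bbP$), the block-diagonal structure of $a\in\calD$ combined with the hypothesis $(1-p_\beta)a=0$ yields $q_\beta a^+ q_\beta=0$ and $q_\gamma a^+ q_\gamma=0$; a routine check then gives $q_\beta\tilde a q_\beta=\ba_\beta$ and $q_\gamma\tilde a q_\gamma=\ba_\gamma$. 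Claim~\ref{Cl.1} applied with "its $\alpha,\beta$" being our $\beta,\gamma$, then cut down by $p_\alpha\le p_\beta$, gives
\[
\|\Ad(p_\alpha u_\beta)\tilde a - \Ad(p_\alpha u_\gamma)\tilde a\|\le \e.
\]

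The bridge $a\leftrightarrow\tilde a$ uses the decomposition $a-\tilde a=(qaq-\ba)-b_\beta-b_\gamma$. Three mechanisms combine: (i) the identity $u_\beta(1-p_\beta)=0$ kills every operator supported above $p_\beta$, so $\Ad(p_\alpha u_\beta)b_\beta=\Ad(p_\alpha u_\beta)b_\gamma=0$, and similarly $\Ad(p_\alpha u_\gamma)b_\gamma=0$; (ii) for the surviving asymmetric term $\Ad(p_\alpha u_\gamma)b_\beta$ (where $b_\beta$ lies below $p_\gamma$ under the disjointness hypothesis so is not killed), Claim~\ref{Cl.2} applied with $c=0$, $d=b_\beta$ and the index $\gamma$ bounds it by $2\e$; (iii) the $q$-corner discrepancy $qaq-\ba$ is supported in $q\le p_{\bar\alpha}$, so \eqref{Est.r} bounds the $\beta$-vs-$\gamma$ difference of $\Ad(p_\alpha u_\cdot)(qaq-\ba)$ directly. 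Assembling these estimates yields $\le 2\e$ for each bridge, and combining with Claim~\ref{Cl.1}'s $\e$ gives $\Delta\le 5\e$.

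The main obstacle is the delicate bookkeeping in the bridging step. In particular, the $q$-part discrepancy $qaq-\ba$ must be controlled using \eqref{Est.r} without inflating the constant (here the relation $\|\bb-\Ad(p_\xi u_\xi)\ba_\xi\|<\e$ from the setup of $\bb$ is what keeps the $\ba_\beta$-vs-$\ba_\gamma$ cross-terms within $2\e$), and the asymmetric annihilation behaviour of $u_\beta$ versus $u_\gamma$ with respect to $b_\beta$ and $b_\gamma$ must be matched precisely with the disjointness refinement of $J$; without that refinement, $q_\beta\tilde a q_\beta=\ba_\beta$ would fail and Claim~\ref{Cl.1} would not apply.
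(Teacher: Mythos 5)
Your overall strategy coincides with the paper's: your $\tilde a$ is literally the operator $a'=a-q'aq'+\bc$ used there (since $\ba+b_\beta+b_\gamma=\ba_\beta+(1-p_\gamma)\ba_\gamma=\bc$ and, for $a\in\calD$ with $(1-p_\beta)a=0$, one has $q'aq'=qaq$), Claim~\ref{Cl.1} is the centerpiece, and \eqref{Est.r} controls the $q$-corner. Your genuine departures --- killing $b_\beta$ and $b_\gamma$ via $u_\xi(1-p_\xi)=0$ and handling the surviving asymmetric term $\Ad (p_\alpha u_\gamma) b_\beta$ with Claim~\ref{Cl.2} --- are sound, and the disjointness refinement of $J$ you add to guarantee $q_\beta\tilde a q_\beta=\ba_\beta$ and $q_\gamma\tilde aq_\gamma=\ba_\gamma$ is a point the paper passes over in silence.

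The gap is in the step $\|\Ad (p_\alpha u_\beta) \tilde a-\Ad (p_\alpha u_\gamma) \tilde a\|\le\e$. Claim~\ref{Cl.1} bounds conjugations by $p_\alpha w_\beta$ and $p_\alpha w_\gamma$, and cutting down by $p_\alpha\le p_\beta$ only replaces the \emph{outer} projection; it does not insert $p_\beta$ (resp.\ $p_\gamma$) between $w_\xi$ and $\tilde a$. Since $w_\xi\in X_{\xi+1}$ satisfies $w_\xi=w_\xi p_{\xi+1}$ but not $w_\xi=w_\xi p_\xi$, one gets
\[
\Ad (p_\alpha w_\xi) \tilde a-\Ad (p_\alpha u_\xi) \tilde a=\Ad (p_\alpha w_\xi) \bigl(s_\xi\tilde a s_\xi\bigr),\qquad s_\xi=p_{\xi+1}-p_\xi,
\]
with $s_\beta\tilde a s_\beta=s_\beta\ba_\beta s_\beta$ and $s_\gamma\tilde a s_\gamma=s_\gamma\ba_\gamma s_\gamma$ (plus possibly $s_\gamma\ba_\beta s_\gamma$, which your refinement removes). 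These are just the blocks of the forcing conditions $\ba_\xi$ sitting in $s_\xi$, which can have norm close to $1$, and their images under $\Ad(p_\alpha w_\xi)$ are not individually small; only the \emph{difference} of the two correction terms is controlled, namely by routing both through the anchor $\bb$ of \eqref{Est.b}. That is exactly the step of the paper's proof you have omitted, and your budget $2\e+\e+2\e$ has no slot for the resulting extra $2\e$; as written the argument either fails here or, once repaired, lands at $7\e$ rather than $5\e$ (harmless for Lemma~\ref{L.EBranch}, but not the stated bound). A smaller quibble: the \eqref{Est.r} estimate for the $q$-corner gives $2\|(w_\beta-w_\gamma)q\|\cdot\|qaq-\ba\|$ and $\|qaq-\ba\|$ is only bounded by $2$, so the ``$\le 2\e$ per bridge'' accounting needs care there as well.
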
 

\begin{proof} Fix $a\in \calD$ with $\|a\|\leq 1$. 
We have that  $\bc=\ba_\beta+(1-p_\gamma) \ba_\gamma$ is a condition in $\bbP$ 
with support $q'=q_\beta\vee q_\gamma$
extending both $\ba_\beta$ and $\ba_\gamma$. 
Let 
\[
a'=a-q'aq' +\bc.
\] 
With $\bar\alpha$ as in  \eqref{Est.r} we have 
$p_{\bar\alpha} a=a p_{\bar\alpha}$ since $a\in \calD$. Therefore  
\begin{align*} 
\Ad (p_\alpha u_\beta) a - 
\Ad (p_\alpha u_\beta) a'
&= \Ad (p_\alpha u_\beta p_{\bar\alpha}) (a-a') 
+
\Ad (p_\alpha u_\beta (p_\beta-p_{\bar\alpha}) )(a-a')\\
&=
\Ad (p_\alpha u_\beta p_{\bar\alpha}) (a-a') . 
\end{align*} 
 By this and an analogous computation for $\gamma$
we have
\begin{align*} 
\Ad (p_\alpha u_\beta) a - \Ad (p_\alpha u_\gamma) a
=&
\Ad (p_\alpha u_\beta p_{\bar\alpha}) (a-a') 
-
\Ad (p_\alpha u_\gamma p_{\bar\alpha}) (a-a') \\
&+
\Ad (p_\alpha u_\beta) a'
- \Ad (p_\alpha u_\gamma) a'
\end{align*} 
Using   \eqref{Est.r} and $p_\beta \ba_\beta=p_\gamma \ba_\gamma=\ba$ 
we conclude that each of the first two summands has norm $\leq \e$, hence 
 $\Delta$ is within 
$2\e$ of 
$\|\Ad (p_\alpha u_\beta) a'
-\Ad (p_\alpha u_\gamma)a'\|$. 
Since $a'\in \calD$ we have  $(1-p_\beta) a'= (1-p_\beta) \ba_\beta$
  and the following.
\[
\Ad (p_\alpha u_\beta) a'
=\Ad (p_\alpha w_\beta) a' - \Ad (w_\beta (1-p_\beta)) \ba_\beta. 
\]
By this and an analogous computation for $\gamma$ we have
\begin{align*} 
\Ad (p_\alpha u_\beta) a'-
\Ad (p_\alpha u_\gamma) a'
=&
\Ad (p_\alpha w_\beta) a' - 
\Ad (p_\alpha w_\gamma) a'\\ 
&+\Ad (w_\beta (1-p_\beta)) \ba_\beta
- \Ad (w_\gamma (1-p_\gamma)) \ba_\gamma.
\end{align*} 
By Claim~\ref{Cl.1} the first difference has norm  $\leq \e$
and by \eqref{Est.b} the second difference has norm 
$\leq 2\e$. 
The conclusion follows. 
\end{proof}

\subsection{Metrics on $X_{\alpha+1}$} We are now within one page worth of 
 definitions and  computations from completing the proof. In order to complement 
 Claim~\ref{Cl.3} in the proof of Lemma~\ref{L.EBranch}, we digress a little bit. 
For $\alpha < \omega_1$ define the following metrics on $X_{\alpha+1}$
(only $d_4$ and $d_2$ will be needed in our proof). 
\begin{align*}
d_{1,\alpha}(u,w)&=\|u-w\|\\
d_{2,\alpha}(u,w)&=\sup_{a\in \calD, \|a\|=1}\| \Ad u a-\Ad w a\|\\
d_{3,\alpha}(u,w)&=\sup_{a\in \cBH, \|a\|=1}\| \Ad u a-\Ad w a\|\\
  d_{4,\alpha}(u,w)& = \|p_\alpha(u-w)\|
\end{align*}
We shall drop the subscript $\alpha$ whenever it is clear from the context. 

\begin{lemma} \label{L.d} For all $\alpha$, on $X_{\alpha+1}$ 
we have $d_4\leq d_2\leq d_3\leq 2d_1$. 
\end{lemma}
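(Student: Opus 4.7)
I would prove the three inequalities in the order $d_3 \leq 2 d_1$, $d_2 \leq d_3$, $d_4 \leq d_2$, since the first two are routine while the third is the heart of the lemma.

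For $d_3 \leq 2 d_1$, expand
\[
\Ad u \, a - \Ad w \, a = (u - w) a u^* + w a (u^* - w^*)
\]
and apply the triangle inequality; since each $u, w \in X_{\alpha+1}$ is a compact perturbation of the partial isometry $v_{\alpha+1}$ compressed by $r_{\alpha+2}$ and $p_{\alpha+1}$, taking contractive representatives gives $\|u\|, \|w\| \leq 1$ and hence $\|\Ad u \, a - \Ad w \, a\| \leq 2\|u-w\|$ for $\|a\|=1$. The bound $d_2 \leq d_3$ is immediate since $\calD \subseteq \cBH$, so the supremum defining $d_2$ ranges over a smaller set than that defining $d_3$.

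The main work is $d_4 \leq d_2$. Given $u, w \in X_{\alpha+1}$ and $\e > 0$, I would produce $a \in \calD$ with $\|a\|=1$ and $\|\Ad u \, a - \Ad w \, a\| \geq \|p_\alpha(u-w)\| - \e$. The plan has three steps: (i) since $u - w = r_{\alpha+2}(u - w) p_{\alpha+1}$ is compact (both $u, w \eqK v_{\alpha+1}$), the norm $\|p_\alpha(u-w)\|$ is attained to within $\e/3$ on some unit vector supported in a finite-dimensional subspace of $p_{\alpha+1} H$; (ii) using the decomposition $p_{\alpha+1} H = \bigoplus_{\beta \leq \alpha} s_\beta H$, approximate this vector by a unit vector $\eta$ lying in a single block $s_\beta H$, $\beta \leq \alpha$, for which $\|p_\alpha(u-w)\eta\| \geq \|p_\alpha(u-w)\| - 2\e/3$; (iii) take $a = |\eta'\rangle\langle \eta|$ for a carefully chosen companion unit vector $\eta' \in s_\beta H$. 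Since $\eta, \eta'$ lie in the same block, $a \in s_\beta \calD s_\beta \subseteq \calD$ with $\|a\|=1$, and
\[
\Ad u \, a - \Ad w \, a = |u\eta'\rangle\langle u\eta| - |w\eta'\rangle\langle w\eta|,
\]
a rank-$2$ operator whose norm I would estimate from below by testing against a unit vector aligned with $p_\alpha(u-w)\eta$.

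The main obstacle is step (iii): the norm of a difference of two rank-one operators need not be comparable to $\|(u-w)\eta\|\cdot\|\eta'\|$ --- it vanishes, for instance, when $u\eta = -w\eta$ and $u\eta' = -w\eta'$ --- so the companion $\eta'$ must be chosen to avoid cancellation of the desired part. I expect to draw $\eta'$ from $s_\beta H$ as an appropriate perturbation of a basis vector, arranged so that $\langle u\eta', w\eta'\rangle$ and $\langle u\eta, w\eta\rangle$ are sufficiently separated; at that point the test against $p_\alpha u\eta$ recovers the bound $\|p_\alpha(u-w)\eta\| - \e/3$ as claimed. An alternative is to allow $a$ to have higher rank within $s_\beta \calD s_\beta$ and read off $\|p_\alpha(u-w)\eta\|$ as a principal singular value of the compression of $p_\alpha(u-w)$ to a small-dimensional subspace of the block.
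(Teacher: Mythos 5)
Your first two inequalities are fine and essentially match the paper. The problem is in your plan for $d_4\leq d_2$, specifically step (ii). You want to replace the unit vector on which $\|p_\alpha(u-w)\|$ is nearly attained by a unit vector $\eta$ lying in a \emph{single} block $s_\beta H$, so that the rank-one operator $a=|\eta'\rangle\langle\eta|$ built from $\eta$ and a companion in the same block lands in $\calD$. But the norm of a compact operator is not approximately attained on single-block vectors: if $p_\alpha(u-w)$ acts like $|e\rangle\langle f|$ with $f=n^{-1/2}\sum_{i<n}f_i$ and the $f_i$ unit vectors in $n$ distinct blocks, then $\|p_\alpha(u-w)\|=1$ while $\|p_\alpha(u-w)\eta\|\leq n^{-1/2}$ for every unit vector $\eta$ supported in one block. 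So the passage from (i) to (ii) loses an uncontrolled factor, and the argument cannot yield $d_4\leq d_2$ (nor any fixed multiple of it). Your fallback of allowing higher rank within $s_\beta\calD s_\beta$ does not help: the obstruction is the confinement to one block, not the rank of $a$.

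The paper's proof is built precisely to avoid this. Membership in $\calD$ only requires $a$ to be block-diagonal, not supported in one block. Working with adjoints, one picks a unit vector $\xi=p_\alpha\xi$ with $\|(v^*-w^*)\xi\|>d-\delta$, lets $\zeta$ be the unit vector in the direction of $v^*\xi-w^*\xi$ (and $\iota$ a second vector so that $\zeta,\iota$ span $v^*\xi$ and $w^*\xi$), decomposes $\zeta=\sum_\gamma x_\gamma\zeta_\gamma$ blockwise, and, using that each block is infinite-dimensional and $v-w$ is compact, chooses in each block a unit vector $\nu_\gamma$ orthogonal to $\zeta_\gamma$ and $\iota_\gamma$ with $\|v\nu_\gamma\|=1$ and $\|(v-w)\nu_\gamma\|$ small. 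With $\nu=\sum_\gamma x_\gamma\nu_\gamma$, the operator that blockwise transposes $\nu_\gamma$ and $\zeta_\gamma$ is a norm-one element of $\calD$ which globally swaps $\nu$ and $\zeta$; evaluating $(vav^*-waw^*)\xi$ then recovers $\|v^*\xi-w^*\xi\|>d-\delta$. If you want to repair your write-up, the two ingredients to import are: spread the test operator across all blocks with coefficients matched to those of $\zeta$, and choose the companion direction to be one on which $v$ and $w$ nearly agree and $v$ is nearly isometric --- the latter is exactly what resolves the cancellation worry you raise in step (iii).
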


\begin{proof}The inequality $d_2\leq d_3$ is trivial, and $d_3\leq 2d_1$ follows from the
following computation. 
\begin{align*}
\|\Ad u a -\Ad w a\|&\leq \|uau^* -uaw^*\|+\|uaw^*-waw^*\|\\
&\leq \|ua\| \cdot \| u^*-w^*\|+ \|u-w\| \cdot \|ua\|
\end{align*}
It remains to prove 
  $d_4 \le d_2$.
  
  Let $v,w\in X_{\alpha+1}$ 
  be given, and put $d = \| p_\alpha(v - w) \|$.  Fix $\delta > 0$ and a unit vector $\xi$ such that
  $\|(v^* - w^*)p_\alpha \xi\| > d - \delta$.  Clearly we may assume  $p_\alpha \xi = \xi$. 
  Let $\zeta$ be a unit vector colinear with $v^*\xi-w^*\xi$ and let $\iota$ be a unit vector 
  orthogonal to $\zeta$ such that  $v^*\xi$ and $w^*\xi$ belong to 
  the linear span of $\zeta$ and $\iota$.  
  Fix scalars $x,y,x',y'$ such that 
  \begin{align*}
    v^*\xi & = x\zeta + y\iota \\
    w^*\xi & = x'\zeta +y'\iota
  \end{align*}
Since $v^*\xi-w^*\xi$ is colinear with $\zeta$, we have $y=y'$. 
Therefore $\|v^*\xi-w^*\xi\|=|x-x'|$. 

  Find representations 
  $\zeta = \sum_{\gamma < \alpha} x_\gamma \zeta_\gamma$
and   $\iota = \sum_{\gamma < \alpha} y_\gamma \iota_\gamma$
  so that $\zeta_\gamma$  and $\iota_\gamma$ belong to the range 
  of $s_\gamma=p_{\gamma+1}-p_\gamma$ for all $\gamma$.
  Since the range of $s_\gamma$ is infinite-dimensional and 
  since $v - w$ is compact, we can find a unit vector $\nu_\gamma$ in
  this range orthogonal to both $\zeta_\gamma$ and $\iota_\gamma$ and 
  such that $\|v\nu _\gamma\| = 1$ but
  $\|v\nu _\gamma - w\nu_\gamma\| < \delta/d$.  Let
  \[
\textstyle    \nu = \sum_{\gamma < \alpha} x_\gamma \nu_\gamma
  \]
  Then $\zeta$, $\iota$, and $\nu$ are mutually 
  orthogonal unit vectors and the rank two operator $a\in\cBH$ defined by $a(\nu) = \zeta$ and
  $a(\zeta) = \nu$ has norm equal to one. 
  Moreover, $a\in\calD$, since for each $\gamma$ the operator $as_\gamma = s_\gamma a$
  is just the rank-two operator which transposes the orthogonal unit vectors $\nu_\gamma$ 
  and $\zeta_\gamma$.
  Note that $((\Ad{v})a)\xi = vav^*\xi = va(x\zeta + y\iota) = xw\nu$ 
  and $((\Ad{w})a)\xi = waw^*\xi = wa(x'\zeta +y\iota) = x'w\nu$.
  Hence,
  \[
    \|((\Ad{v})a - (\Ad{w})a)\xi\| = \| (x-x') w\nu\|=|x-x'| > d - \delta.
  \]
  Since $\delta > 0$ was arbitrary, we conclude that $d_2(v,w) \ge d$.
\end{proof}

\begin{proof}[Proof of Lemma~\ref{L.EBranch}]
In order to show
$\{r_{\alpha+2} u_\beta p_{\alpha+1}: \alpha+\omega<\beta, \beta\in J\}$
is a $5\e$-branch, it suffices to show that 
$\|p_{\alpha+3} (u_\beta-u_\gamma)p_{\alpha+2}\|\leq 5\e$
whenever $\alpha+\omega<\beta<\gamma$ for $\beta,\gamma$ in $J$. 
But the inequality $d_{4,\alpha+1}\leq d_{2,\alpha+1}$ from 
 Lemma~\ref{L.d} implies 
\[
\|p_{\alpha+3} (u_\beta-u_\gamma)p_{\alpha+2}\|\leq 
\sup_{a\in \calD} \|\Ad (p_{\alpha+3} u_\beta p_{\alpha+2}) a-\Ad (p_{\alpha+3} 
u_\gamma p_{\alpha+2}) a\|
\]
and the right hand side is $\leq5\e$ by 
 Claim~\ref{Cl.3}
 \end{proof}
 
 Since $\e$ was arbitrary, Lemma~\ref{L.EBranch} and Lemma~\ref{LA1} 
imply that  $T$ has a cofinal branch.  
By Lemma~\ref{L.Phi},  $\Phi$ is inner. 

\section{The proof of Theorem~\ref{T1}}
\label{S.T1}
The proof of Theorem~\ref{T1} is  reasonably similar
 to the proof of the analogous result from \cite[\S 4]{Ve:OCA}. All we need is the analysis of  coherent families of Polish spaces from \S\ref{SS.Coh} and a fragment of PFA.  
Fix $\kappa\geq \aleph_2$, write $H=\ell^2(\kappa)$  and let~$\Phi$ be an automorphism 
of the Calkin algebra $\cCH$. 
Fix a basis $\{e_\alpha: \alpha<\kappa\}$ of $H$ and denote the projection to 
$\overline{\Span\{e_\alpha: \alpha\in \lambda\}}$ by~$p_\lambda$.

Recall that $\cP_{\omega_1}(\kappa)$ denotes the family of all countable subsets of $\kappa$. 
This set is  $\sigma$-directed under the inclusion and it is a lower semilattice.  
For  every countable subset $\lambda\subseteq \kappa$ fix projection $r_\lambda$
with separable range such that $\Phi(\dot p_\lambda)=\dot r_\lambda$. 
For $\lambda\leq \lambda'$ in $\Lambda$ we have $\dot r_\lambda\leq \dot r_{\lambda'}$
but not necessarily $r_\lambda\leq r_{\lambda'}$. 
By \cite{Fa:All} we can fix a partial isometry $v_\lambda$ such that 
$\Ad v_\lambda$ implements the restriction of $\Phi$ to $\dot p_\lambda \cCH \dot p_\lambda$. 
For $\lambda\in \cP_{\omega_1}(\kappa)$ let
\[
X_{\lambda}=\{r_\lambda w p_\lambda: w\in \cBH, w\eqK v_\lambda\}.
\]
Let us prove a few properties of $X_\lambda$. 
\begin{enumerate}
\popcounter
\item  $X_{\lambda}$ is a norm-separable complete metric space.  


\item \label{L.pi+} 
If $\lambda\subseteq \lambda'$  then the map 
$\pi_{\lambda'\lambda}\colon X_{\lambda'}\to X_{\lambda}$
defined by 
\[
\pi_{\lambda'\lambda}(w)=r_\lambda w p_\lambda
\]
is a   contraction.
\pushcounter
\end{enumerate}
The proof is analogous to  the proof of \eqref{L.pi} in \S\ref{S.T}. 

Consider the  coherent family of Polish spaces 
\[
\bbF=(X_\lambda, \pi_{\lambda'\lambda}, 
\pi_{\lambda'\lambda},\text{    for }\lambda\in \cP_{\omega_1}(\kappa)).  
\]
The omitted proof of the following uses Lemma~\ref{L03} and is analogous to the proof of Lemma~\ref{L.Phi}.
\begin{lemma}  \label{L.Phi+} The following are equivalent. 
\begin{enumerate} 
\popcounter
\item\label{L.Phi.1+} $\Phi$ is inner. 
\item\label{L.Phi.2+} 
   There is   $v\in \cBH$ such that $\dot v$ is a unitary in $\cCH$ 
   and for all  $\lambda\in \cP_{\omega_1}(\kappa)$ 
we have $r_\lambda v p_\lambda\in X_{\lambda}$. 
\item \label{L.Phi.3+} The coherent family of Polish  spaces  $\bbF$ is trivial. \qed
\end{enumerate}
\end{lemma}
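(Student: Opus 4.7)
I would imitate the proof of Lemma~\ref{L.Phi}, replacing countable ordinals by countable subsets $\lambda\subseteq\kappa$ and the Polish $\omega_1$-tree $T$ by the coherent family $\bbF$. The equivalence $(2)\Leftrightarrow(3)$ should be a direct translation of definitions: given $v$ as in~$(2)$, the assignment $\lambda\mapsto r_\lambda vp_\lambda$ is easily seen to land in $\varprojlim_\lambda X_\lambda$; conversely, from any coherent $(x_\lambda)\in\varprojlim_\lambda X_\lambda$ one builds a global $v$ by piecing together the local operators using that $H=\ell^2(\kappa)$ is the ``direct limit'' of the subspaces $p_\lambda H$ as $\lambda$ ranges over $\cP_{\omega_1}(\kappa)$.

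For $(2)\Rightarrow(1)$, the condition $r_\lambda vp_\lambda\in X_\lambda$ for every countable $\lambda$ says exactly that $\Ad v$ agrees with $\Ad v_\lambda$, and hence with $\Phi$, on the corner $\dot p_\lambda\cCH\dot p_\lambda$. Since every projection in $\cBH$ with separable range is dominated by some such $\dot p_\lambda$, Lemma~\ref{L03} forces $\Phi=\Ad v$, so $\Phi$ is inner.

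For $(1)\Rightarrow(2)$, fix an implementer $u$ of $\Phi$. For each countable $\lambda$, both $\Ad u$ and $\Ad v_\lambda$ represent $\Phi$ on $\dot p_\lambda\cCH\dot p_\lambda$, so Lemma~\ref{L01} yields $z_\lambda\in\bbT$ with $up_\lambda-z_\lambda v_\lambda$ compact. To show the scalar is independent of $\lambda$, I would fix a countable $\lambda_0\subseteq\kappa$ with $p_{\lambda_0}$ noncompact and, in analogy with the normalization following~\eqref{I.v-alpha}, rescale each $v_\lambda$ so that $v_\lambda p_{\lambda_0}\eqK v_{\lambda_0}$ whenever $\lambda\supseteq\lambda_0$. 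Then the equalities $up_{\lambda_0}\eqK z_\lambda v_{\lambda_0}$ and $up_{\lambda_0}\eqK z_{\lambda_0}v_{\lambda_0}$, together with the noncompactness of $p_{\lambda_0}$, force $z_\lambda=z_{\lambda_0}$ for such $\lambda$; an arbitrary $\lambda$ is handled by passing through $\lambda\cup\lambda_0$. Setting $v:=\overline{z_{\lambda_0}}\,u$ then verifies~$(2)$. The main obstacle I foresee lies in the converse half of $(2)\Leftrightarrow(3)$: because the chosen $r_\lambda$ are only monotone modulo compacts, the assembly of a coherent family $(x_\lambda)$ into a single bounded operator $v$ with $r_\lambda vp_\lambda\in X_\lambda$ for every $\lambda$ requires a limit argument exploiting the explicit description of $X_\lambda$ as a compact perturbation of $v_\lambda$ inside the corner $r_\lambda\cBH p_\lambda$; this step has no counterpart in the tree setting, where branches are indexed by chains and the assembly is immediate.
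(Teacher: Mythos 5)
Your plan is the paper's plan: the proof of Lemma~\ref{L.Phi+} is omitted there with precisely the indication you follow (argue as in Lemma~\ref{L.Phi}, using Lemma~\ref{L03} for \eqref{L.Phi.2+}$\Rightarrow$\eqref{L.Phi.1+} and Lemma~\ref{L01} together with a phase normalization for \eqref{L.Phi.1+}$\Rightarrow$\eqref{L.Phi.2+}). Your observation that the $v_\lambda$ must first be rescaled coherently, comparing an arbitrary $\lambda$ with a fixed $\lambda_0$ through $\lambda\cup\lambda_0$, is the right adaptation of the normalization following \eqref{I.v-alpha}, and it is genuinely needed for the statement to hold as written.

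The one step you leave open, \eqref{L.Phi.3+}$\Rightarrow$\eqref{L.Phi.2+}, is in fact the direction used in \S\ref{S.T1} (the lemma is applied there in the form ``$\Phi$ not inner implies $\bbF$ nontrivial''), so it must be closed; but your contrast between the tree and the directed-set settings is miscalibrated. The assembly is not literally immediate even for trees, since the compatibility condition $r_\lambda x_{\lambda'}p_\lambda=x_\lambda$ is a compression rather than a restriction; and the same short argument handles both cases. Given $(x_\lambda)\in\varprojlim_\lambda X_\lambda$ and $\xi$ with $p_\lambda\xi=\xi$ (every vector of $\ell^2(\kappa)$ has countable support, so such $\lambda$ exists), for $\lambda\subseteq\lambda'\subseteq\lambda''$ one has $x_{\lambda''}\xi-x_{\lambda'}\xi=(1-r_{\lambda'})x_{\lambda''}\xi$, which is orthogonal to $x_{\lambda'}\xi\in r_{\lambda'}H$; hence $\|x_{\lambda'}\xi\|^2$ is a nondecreasing net, bounded by $\sigma$-directedness of $\cP_{\omega_1}(\kappa)$, and therefore $(x_{\lambda'}\xi)_{\lambda'}$ is a Cauchy net. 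Setting $v\xi:=\lim_{\lambda'}x_{\lambda'}\xi$ gives a linear operator with $r_\lambda vp_\lambda=\lim_{\lambda'}r_\lambda x_{\lambda'}p_\lambda=x_\lambda\in X_\lambda$ for every $\lambda$, which is \eqref{L.Phi.2+}. With this supplied, your proposal is complete and agrees with the intended proof.
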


If $\Phi$ is not inner, then by Lemma~\ref{L.Phi+} and Lemma~\ref{LA1+}
there is an $\e>0$ and a cofinal subfamiy $\bbF'$ of $\bbF$ with no $\e$-branches. 
By PFA and Lemma~\ref{LA2+}, 
there is  a strictly increasing map  
  $f\colon \omega_1\to \bbF$ such that the Polish $\omega_1$-tree
 $(X_{f(\alpha)}, d_{f(\alpha)}, \pi_{f(\beta)f(\alpha)}, \alpha\leq \beta<\omega_1)$ 
 is $\e/6$-special. 
 Then $Z=\bigcup f[\omega_1]$ is an $\aleph_1$-sized subset of $\kappa$. 
Let $\cC(Z)$ denote the Calkin algebra associated with $\cB(\ell^2(Z))$. 
By modifying the proof of Lemma~\ref{LA2+} and meeting
some additional dense sets, we can assure that 
the restriction $\Phi_Z$ of $\Phi$ to $\cC(Z)$ is  an automorphism of $\cC(Z)$. 

 Theorem~\ref{T0} implies $\Phi_Z$ is inner and  Lemma~\ref{L.Phi}
implies $\Phi_Z$ is outer. This contradiction concludes the proof of Theorem~\ref{T1}. 

\section{Concluding remarks} 
\label{S.Concluding}

The existence of a nontrivial automorphism of $\cP(\bbN)/\Fin$ clearly implies the existence
of a nontrivial automorphism of $\cP(\kappa)/\Fin$ for every infinite~$\kappa$. 
Velickovic announced that it is possible to construct a 
nontrivial automorphism of $\cP(\aleph_2)/\Fin$ by other means (see \cite[p. 13]{Ve:OCA})
but the proof of this result is unfortunately not available. 
The situation with automorphisms of Calkin algebras is even less clear. 
There are no obvious implications between the existence of 
outer automorphisms of the Calkin algebra associated with Hilbert spaces
of different densities. 
I don't even  know whether  it is relatively 
consistent with ZFC that the Calkin algebra associated with 
some nonseparable Hilbert space has an outer automorphism? 

While $\cB(H)$ has the unique nontrivial 
two-sided closed ideal if $H$ is separable, 
in the nonseparable case there are as many such ideals 
as there are infinite cardinals less or equal than  the character density of $H$. 
Therefore there are several  `Calkin algebras' associated with a large Hilbert space  
$H$. The existence of outer automorphisms of these algebras  will
be investigated in
a forthcoming joint paper with Ernest Schimmerling and Paul McKenney. 
\providecommand{\bysame}{\leavevmode\hbox to3em{\hrulefill}\thinspace}
\providecommand{\MR}{\relax\ifhmode\unskip\space\fi MR }
\providecommand{\MRhref}[2]{%
  \href{http://www.ams.org/mathscinet-getitem?mr=#1}{#2}
}
\providecommand{\href}[2]{#2}

\end{document}